	\newcommand{\ftn}[3]{ #1 : #2 \longrightarrow #3 }
	\newcommand{\ftni}[3]{ #1 : #2 \overset{\cong}{\longrightarrow}#3 }
	\newcommand{\setof}[2]{\left\{ #1 \: : \: #2 \right\}}
	\newcommand{\coker}{\operatorname{coker}}
	\newcommand{\multialg}[1]{\mathcal{M}(#1)}
	\newcommand{\corona}[1]{\mathcal{Q}(#1)}
	\newcommand{\Z}{\mathbb{Z}}
	\newcommand{\C}{\mathbb{C}}
	\newcommand{\Q}{\mathbb{Q}}
	\newcommand{\N}{\mathbb{N}}
	\newcommand{\K}{\mathbb{K}}
	\newcommand{\id}{\operatorname{id}}
	\newcommand{\ksix}{K_{\mathsf{six}}}
	\newcommand{\Ksix}{K_{\mathsf{six}}^{+}}
	\newcommand{\Kscale}{K_{\mathsf{six}}^{+, \Sigma }}
	\newcommand{\wsc}[1]{{#1}^{\Sigma}}
\newcommand{\upos}[2]{[#1]_{#2}}
\newcommand{\class}{\mathcal C}
\newcommand{\I}{\mathfrak{I}}
\newcommand{\Ia}{\mathfrak{J}}
\newcommand{\A}{\mathfrak{A}}
\newcommand{\B}{\mathfrak{B}}
\newcommand{\tKsix}{\widetilde{K}_\textnormal{six}^{+,\Sigma}}
\newcommand{\oo}{\mathbf{1}\mathbf{1}}
\newcommand{\oi}{\mathbf{1}\mathbf{\infty}}
\newcommand{\io}{\mathbf{\infty}\mathbf{1}}
\newcommand{\ii}{\mathbf{\infty}\mathbf{\infty}}
	\newcommand{\D}{\mathfrak{D}}
	\newcommand{\ppp}{\mathsf{p}_{E_3}}
	\numberwithin{equation}{section}
	\theoremstyle{plain}
	\newtheorem{lemma}[equation]{Lemma}
	\newtheorem{theor}[equation]{Theorem}
	\newtheorem{propo}[equation]{Proposition}
	\newtheorem{corol}[equation]{Corollary}
	\newtheorem{conje}[equation]{Conjecture}
	\theoremstyle{definition}
	\newtheorem{defin}[equation]{Definition}
	\newtheorem{remar}[equation]{Remark}
	\newtheorem{examp}[equation]{Example}
	\numberwithin{figure}{section}
\newenvironment{proofof}[1]{{\noindent\emph{Proof of #1.}}\\}{\hfill$\square$}
	\newcommand{\step}[2]{\noindent{\sc Step #1: #2}\\}
	\newcommand{\mystep}[1]{{\sc Step #1}}
\begin{document}
	\title{The extension problem for graph $C^*$-algebras}

	\author{S{\o}ren Eilers}
        \address{Department of Mathematical Sciences \\
        University of Copenhagen\\
        Universitetsparken~5 \\
        DK-2100 Copenhagen, Denmark}
        \email{eilers@math.ku.dk }
	
		\author{James Gabe}
        \address{School of Mathematics and Statistics\\
University of Glasgow\\
University Place\\
Glasgow G12 8QQ\\
United Kingdom}\curraddr{School of Mathematics and Applied Statistics\\
University of Wollongong\\NSW 2522, Australia}
        \email{jamiegabe123@hotmail.com }
	
	\author{Takeshi Katsura}
        \address{Department of Mathematics, Faculty of Science and Technology, Keio
University, 3-14-1 Hiyoshi, Kouhoku-ku, Yokohama, Japan, 223-8522.}
  \email{katsura@math.keio.ac.jp}
	
	\author{Efren Ruiz}
        \address{Department of Mathematics\\University of Hawaii,
Hilo\\200 W. Kawili St.\\
Hilo, Hawaii\\
96720-4091 USA}
        \email{ruize@hawaii.edu}
        
         \author{Mark Tomforde}
        \address{Department of Mathematics\\University of Houston\\
Houston, Texas\\
77204-3008, USA}
        \email{tomforde@math.uh.edu}
        \date{\today}

	\keywords{Extensions, graph $C^{*}$-algebras, $K$-theory, fullness.}
	\subjclass[2000]{Primary: 46L55}

	\begin{abstract}
	 We give a complete $K$-theoretical description of when an essential extension of two simple graph $C^{*}$-algebras is again a graph $C^{*}$-algebra.   
	\end{abstract}

        \maketitle

\section{Introduction}

Whenever a class $\class$ of $C^*$-algebras is closed under passing to ideals and quotients, one may ask whether or not  in any extension
\begin{equation}
\xymatrix{
{\mathfrak e}:&0\ar[r]&\I\ar[r]^-{\iota}&\A\ar[r]^-{\pi}&\A/\I\ar[r]&0}\label{ses}
\end{equation}
with $\I,\A/\I\in\class$, it is automatic  that  $\A\in\class$. The class of AF algebras (\cite[Chapter~9]{ege:dca}), the class of Type I $C^*$-algebras (\cite[6.2.6]{gkp:cag}), the class of nuclear $C^*$-algebras (cf.\ \cite[Exercise~3.8.1]{npbno:cfa}), and the class of purely infinite $C^*$-algebras (\cite[Theorem~4.19]{ekmr:npic}) 
all have this property, but there are also many important classes when such a permanence result fails in obvious ways.  In this case, one may  usefully ask instead whether there is a $K$-theoretical 
description of when one may conclude from membership of $\class$ at the extremes to membership of $\class$ in the middle. In most, if not all, of the instances when such results are known, the $K$-theoretical data used comes from the six term exact sequence
\begin{equation}\vcenter{\xymatrix{
K_0(\I)\ar[r]^-{\iota_0}&K_0(\A)\ar[r]^-{\pi_0}&K_0(\A/\I)\ar[d]^-{\partial_0}\\
K_1(\A/\I)\ar[u]^-{\partial_1}&K_1(\A)\ar[l]^-{\pi_1}&K_1(\I)\ar[l]^-{\iota_1}}}\label{eq:ksix}
\end{equation}
as summarized in Table \ref{samples}.

In Theorem \ref{thm:extensions} of this paper, we provide a result of this nature for the class of graph $C^*$-algebras under the assumption that $\I$ and $\A/\I$ are simple, and $\I$ is not a complemented ideal of $\A$.  Reflecting complications arising from the fact that simple graph $C^*$-algebras may be AF as well as purely infinite, the $K$-theoretical obstructions (which we show by example are all necessary) are rather more complicated than in the cases previously known, with the possible exception of \cite{mdtal:ecrrzc} (which we have not listed in Table \ref{samples} because the class to which it applies is too complicated to describe in the table). We establish our results by a two-step procedure of invoking classification results by $K$-theory paired with range results, thereby proving that $\A$ is a graph $C^*$-algebra by first constructing a graph $E$ whose $C^*$-algebra has the necessary $K$-theoretic data, and then applying classification results to conclude $\A \cong C^*(E)$.

In the case where $\I$ is a complemented ideal, $\A$ will always be a graph $C^*$-algebra since this class is closed under direct sums.
Note also that strictly speaking, the class of graph $C^*$-algebras is not closed under passing to ideals and quotients, as only so-called gauge-invariant ideals respect the structure. Hence we work only with graph $C^*$-algebras with finitely many ideals, where all ideals are gauge-invariant. Importantly for our approach, a graph $C^*$-algebra with finitely many ideals automatically has real rank zero.

\begin{table}
\begin{center}
\begin{tabular}{|l|c|c|}\hline
Class&Obstruction&Reference\\\hline\hline
Real rank zero&$\partial_0=0$&\cite{lgbgkp:crrz}\\\hline
Stable rank one&$\partial_1=0$&\cite{hlmr:eilca}\\\hline
Stably finite&$\operatorname{Im}\partial_1\cap K_0(\I)_+=0$&\cite{jss:eceia}
\\\hline
AT algebras of real rank zero&$\partial_*=0$&\cite{hlmr:eilca}\\\hline
Cuntz-Krieger algebras of real rank zero&$\partial_0=0$&\cite{rb:ecka}\\\hline
\end{tabular}
\end{center}
\caption{Sample extension results.}\label{samples}
\end{table}
\begin{table}
\begin{center}
\begin{tabular}{|c||c|c|}
\hline 
Case & $\mathfrak{I}$ & $\mathfrak{A} / \mathfrak{I}$ \\ \hline\hline
$[\mathbf{1}\mathbf{1}]$ & AF & AF \\ 
$[\mathbf{1} \infty]$ & AF & Kirchberg \\
$[\infty \mathbf{1} ]$ & Kirchberg & AF \\
$[\infty \infty]$ & Kirchberg & Kirchberg \\ \hline
\end{tabular}\qquad 
\begin{tabular}{|c||c|c|}\hline
Case & $\mathfrak{A}$ & $\mathfrak{A} / \mathfrak{I}$ \\ \hline\hline
$\upos{-}{0}$ & Nonunital & Nonunital \\ 
$\upos{-}{1}$ & Nonunital & Unital \\
$\upos{-}{2}$ & Unital & Unital \\\hline
\end{tabular}
\end{center}
\caption{Notation for the 12 cases we consider.}\label{cases}
\end{table}

Providing such a characterization has been the ambition of some of the authors for almost a decade, and the path to obtaining a complete result has been unusually indirect. To describe the contribution of the paper at hand, we introduce notation for the 12 different cases we have to address in proving the result.  As already mentioned, the ideal and quotient are either AF or purely infinite by the dichotomy of simple graph $C^*$-algebras, and we denote the four possible cases as indicated in the left part of Table \ref{cases}. By our assumptions, $\I$ does not have a unit. Further, when $\A$ has a unit, so does $\A/\I$, and hence the number of units among the three $C^*$-algebras $\A$, $\I$, and $\A / \I$ uniquely determines which of these three $C^*$-algebras has a unit. We denote the cases  by $\upos{-}{*}$ with the symbol $*\in\{0,1,2\}$ indicating the number of units among the three $C^*$-algebras, as indicated in the right part of Table \ref{cases}. 

 Back in 2011, the four senior authors thought to have completed the case where $\I$ is stable (this is automatic by \cite{semt:cnga} in all but the $\upos{\oo}{*}$ cases) and in fact the first named author announced this at the Kyoto conference ``$C^*$-Algebras and Applications'' at RIMS. We then went on to pursue the remaining $\upos{\oo}{*}$ case, and had succeeded in solving that by 2013 in \cite{setkermt:iagc}. However, before publishing a  proof of our combined result, Gabe in \cite{jg:nnae} exposed that a claim from \cite{gaedk:avbfat} used in the paper \cite{segrer:okfe} was false (see \cite{segrer:cccabfis} for a detailed corrigendum), thereby  rendering incomplete the proof in a single case, namely the one denoted $\upos{\oi}{1}$. The example Gabe provided to show that classification by the standard invariant fails was, however, not a counterexample to our permanence statement, so we tried at several occasions to remedy this situation. Finding a way took us several years: Indeed, the missing piece of the puzzle was provided in a recent paper \cite{jger:} of Gabe and Ruiz, to the effect of providing a complete classification result applicable to the case left open. Joining forces with Gabe, we were finally able to finish the extension result by providing the necessary range result in the paper at hand. In Table \ref{resappl} we list, as pairs, references to the classification results appropriate range results for each of our 12 cases.

\begin{table}
\begin{center}
\begin{tabular}{|c||c|c|c|}\hline
&$\upos{-}{0}$&$\upos{-}{1}$&$\upos{-}{2}$\\\hline\hline
 $[\oo]$&\cite{gae:cilssfa}, \cite{setkermt:iagc}&\cite{gae:cilssfa}, \cite{setkermt:iagc}&\cite{gae:cilssfa}, \cite{setkermt:iagc}\\ \hline     
 $[\oi]$&\cite{segrer:cecc}, \cite{setkmtjw:rking}&\cite{jger:}, Theorem \ref{takeshitheor}&\cite{segrer:scecc}, \cite{setkmtjw:rking}\\  \hline    
  $[\io]$&\cite{segrer:cecc}, \cite{setkmtjw:rking}&\cite{segrer:okfe}, \cite{setkmtjw:rking}&\cite{segrer:scecc}, \cite{setkmtjw:rking}\\    \hline  
   $[\ii]$&\cite{mr:ceccstesk}, \cite{setkmtjw:rking}&\cite{segrer:okfe}, \cite{setkmtjw:rking}&\cite{segr:rccconi}, \cite{setkmtjw:rking}\\  \hline  
   \end{tabular}
   \end{center}
   \caption{References for the classification result (listed first) and range result (listed second) in each of our 12 cases.}\label{resappl}
   \end{table}

We note that  among the results listed in Table \ref{samples},  the one by Bentmann  is closely and subtly related to the result presented here, and it is worth discussing how our work is related to \cite{rb:ecka}. Indeed, it was an important  open question whether a unital extension of  Cuntz-Krieger algebras is itself in the Cuntz-Krieger class (it is necessary to stabilize the ideal for such a statement to be non-trivial), but since Restorff's classification result for Cuntz-Krieger algebras (\cite{gr:cckasi}) applies only when the middle $C^*$-algebra is known to be Cuntz-Krieger, we were only able to argue in such a way in the few cases when an appropriate external classification result was known, cf.\ \cite{sea:dpcke}. For instance, the necessary result for extensions of simple Cuntz-Krieger algebras was provided in \cite{segr:rccconi}.

Bentmann was able to circumvent this issue by elaborating on an idea from \cite{jc:cctmc2} to make contact to a deep result by Kirchberg (\cite{ek:nkmkna}), providing a complete solution of the extension problem in the Cuntz-Krieger case, and leading us to the following conjecture:

\begin{conje}
Let $C^*(E_1)$ and $C^*(E_3)$ be unital graph $C^*$-algebras with finitely many ideals, and consider the unital extension
\[
\xymatrix{0\ar[r]&C^*(E_1)\otimes \K\ar[r]&\mathfrak{X}\ar[r]&C^*(E_3)\ar[r]&0.}
\]
Then $\mathfrak X$ is a graph $C^*$-algebra if and only if
$\partial_0:K_0(C^*(E_3))\to K_1(C^*(E_1))$ vanishes.
\end{conje}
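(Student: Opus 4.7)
The plan is to establish $(a) \Rightarrow (b) \Leftrightarrow (c)$ using structural results for graph $C^*$-algebras and the Brown--Pedersen real-rank-zero criterion, and then to tackle the substantial implication $(c) \Rightarrow (a)$ via classification by filtered $K$-theory together with a range result producing a graph realizing the invariant of $\mathfrak{X}$.

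For $(a) \Rightarrow (b)$: the ideal lattice of $\mathfrak{X}$ injects into the product of the ideal lattices of $\I = C^*(E_1)\otimes\K$ and $\A/\I = C^*(E_3)$, and is therefore finite. Any graph $C^*$-algebra with finitely many ideals has its defining graph satisfying Condition (K) (otherwise a cycle without exits produces a subquotient isomorphic to $M_n(C(\mathbb{T}))$, which has uncountably many ideals), and hence has real rank zero. For $(b) \Leftrightarrow (c)$: the same argument shows $\I$ and $\A/\I$ have real rank zero, so the Brown--Pedersen theorem \cite{lgbgkp:crrz} applies and characterizes real rank zero of $\mathfrak{X}$ by the vanishing of $\partial_0$.

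For $(c) \Rightarrow (a)$, assume $\partial_0 = 0$, so $\mathfrak{X}$ has real rank zero and a finite primitive ideal space whose simple subquotients are, by the dichotomy for simple graph $C^*$-algebras, each AF or Kirchberg. One would compute the filtered $K$-theory $\ksix(\mathfrak{X})$ equipped with the positive cone and the scale $[1_\mathfrak{X}]_0$, construct a graph $F$ realizing this invariant (a range result in the spirit of \cite{setkmtjw:rking}), and invoke a classification theorem for unital real rank zero graph $C^*$-algebras with finite primitive ideal space to conclude $\mathfrak{X} \cong C^*(F)$. In the two-block case where both $\I$ and $\A/\I$ are simple this is exactly Theorem \ref{thm:extensions}, and a natural route to the general case is to induct on the length of a composition series, peeling off a minimal ideal $\Ia \subseteq \I$ at each step and applying the inductive hypothesis to $\mathfrak{X}/\Ia$ (whose boundary map vanishes since it is induced from $\partial_0$).

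The main obstacle is to produce both the classification and a matching range result in sufficient generality. While Table \ref{resappl} catalogues the four case splits $[\oo], [\oi], [\io], [\ii]$ with their corresponding classification and range inputs for the length-two situation, assembling these into a uniform statement for arbitrary finite ideal lattices with interleaved AF and Kirchberg strata is far from routine: the positive cones, the scales, and the relevant $K$-theoretic invariants behave differently across the AF/purely infinite interface, and the compatibility conditions that surface in the present paper's twelve cases reappear in magnified form at every level of the composition series. It is precisely this combinatorial and $K$-theoretic assembly that is missing from the literature, and providing it constitutes the content of the conjecture.
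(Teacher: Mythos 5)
The statement you are proving is labelled as a \emph{Conjecture} in the paper, and the paper does not prove it: it explicitly records that ``all but $(c)\Longrightarrow(a)$ is clear,'' establishes $(c)\Longrightarrow(a)$ only in the special case where both $C^*(E_1)$ and $C^*(E_3)$ are simple (Corollary \ref{bentmannplus}, via Theorem \ref{thm:extensions}), and states that no workable approach to the general finite-ideal-lattice case is currently available. Your treatment of $(a)\Rightarrow(b)\Leftrightarrow(c)$ is correct and matches the paper's reasoning (Condition (K) from finiteness of the ideal lattice, real rank zero from \cite{jhhws:pickdg}/\cite{jaj:rrcag}, and the Brown--Pedersen criterion \cite{lgbgkp:crrz} applied to an extension with real rank zero at both ends). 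But your $(c)\Rightarrow(a)$ is a programme, not a proof, and you say as much; so the proposal does not establish the statement, and indeed could not be expected to, since the statement is open.

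Two concrete gaps are worth naming. First, even in the simple-simple case you cannot simply say ``this is exactly Theorem \ref{thm:extensions}'': that theorem has hypotheses (1)--(3), and while (1) is your assumption $(c)$ and (3) is automatic because $\I=C^*(E_1)\otimes\K$ with $C^*(E_1)$ a unital graph algebra, hypothesis (2) --- that $K_0(\A/\I)_+=K_0(\A/\I)$ forces $K_0(\A)_+=K_0(\A)$ --- is \emph{not} a formal consequence of $\partial_0=0$. Verifying it is the entire content of Corollary \ref{bentmannplus}, and in the $\upos{\oi}{2}$ and $\upos{\ii}{2}$ cases it requires showing the extension is full (using $\I\cong\K$, respectively simplicity of the corona) and then invoking \cite[Corollary 3.22]{segrer:okfe} or \cite[Proposition 4.2]{segrer:okfe}. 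Second, the proposed induction on a composition series does not close: after peeling off a minimal ideal $\Ia$ and concluding that $\mathfrak X/\Ia$ is a graph $C^*$-algebra, you are left with an extension of a \emph{non-simple} graph $C^*$-algebra by a simple one, which lies outside the scope of Theorem \ref{thm:extensions}; moreover no classification theorem or range result of the required generality (arbitrary finite primitive ideal space with mixed AF/Kirchberg strata, with positive cones and scales) exists --- the failure of classification by the naive invariant already in the $\upos{\oi}{1}$ case, repaired here only via \cite{jger:}, shows that such an assembly cannot be taken for granted.
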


The forward implication is known to hold, and Bentmann's result establishes the converse when both $E_1$ and $E_3$ are finite graphs with no sinks and no sources. By a slight refinement of our main result, we are able to remove both the finiteness and the ``no sinks and no sources" hypotheses in the very special case when both $C^*(E_1)$ and $C^*(E_3)$ are simple (see Corollary \ref{bentmannplus}). At this stage we do not have a workable conjecture for the general extension problem for graph $C^*$-algebras with finitely many ideals.

The paper is organized as follows: We summarize notation and provide a few preliminary lemmas in Section \ref{prelims}. In Section \ref{remain} we solve the case hitherto left open.  In Section \ref{main} we state our extension result and succinctly explain how existing results may be combined to establish the extension result in the remaining 11 cases mentioned above.

\section{Preliminaries}\label{prelims}

We follow the notation and definition for graph $C^*$-algebras in \cite{flr:graph}.  In particular, our arrows are drawn in the direction for which sinks and infinite emitters are singular.

\begin{defin}
Let $\mathfrak{A}$ be a $C^{*}$-algebra.  An element $a \in \mathfrak{A}$ is \emph{norm-full} if $a$ is not contained in any closed two-sided ideal of $\mathfrak{A}$.  An extension $\mathfrak{e} : 0 \to \mathfrak{I} \to \mathfrak{A} \to \mathfrak{A}/\I \to 0$ is \emph{full} if for every nonzero $x \in \A/\I$, $\tau_{ \mathfrak{e} } ( x )$ is norm-full in $\corona{ \mathfrak{I} }$, where $\multialg{ \mathfrak{I} }$ is the multiplier algebra of $\mathfrak{I}$, $\corona{\I} = \multialg{ \mathfrak{I} } / \mathfrak{I} $, and $\ftn{ \tau_{ \mathfrak{e} } }{{\A/\I} }{ \corona{ \mathfrak{I} } }$ is the Busby invariant associated to $\mathfrak{e}$.
\end{defin}

\begin{lemma}\label{newlem}
Let $\mathfrak{A}$ be a $C^{*}$-algebra with exactly one proper nontrivial ideal $\mathfrak{I}$ satisfying the following four conditions:
\begin{enumerate}[(1)]
\item $\I$ is AF,
\item $\A/\I$ is purely infinite,
\item $\A$ is separable, and
\item $K_0( \mathfrak{A} )_{+} = K_{0} ( \mathfrak{A} )$.
\end{enumerate}
Then $\I$ is stable, and $\frak e$ is a full extension.
\end{lemma}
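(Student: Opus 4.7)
The plan is to prove stability of $\I$ and fullness of $\mathfrak{e}$ as two separate steps, both exploiting the single-ideal hypothesis.

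\textbf{Step 1: Stability of $\I$.} First I would observe that $\I$ is simple, since any ideal of $\I$ is an ideal of $\A$ and must therefore be $0$ or $\I$. Next, I would argue $\I$ is non-unital: if $e \in \I$ were a unit, then for every $a \in \A$ both $ae$ and $ea$ would lie in $\I = e\I e$, forcing $ae = eae = ea$ and making $e$ central. The projection $1-e$ (computed in $\multialg{\A}$ when $\A$ is non-unital) would then produce a second nontrivial ideal $(1-e)\A$, contradicting uniqueness. Hence $\I$ is a non-unital separable simple AF algebra. Stability then amounts to showing that $\Sigma(\I)$ is unbounded in $K_0(\I)_+$, since for a non-unital simple AF algebra the only alternative is the pathology of a bounded scale without maximum. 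Here I would leverage the hypothesis $K_0(\A)_+ = K_0(\A)$ together with the six-term sequence
\[
K_1(\A/\I) \to K_0(\I) \to K_0(\A) \to K_0(\A/\I) \to 0
\]
(which terminates because $K_1(\I) = 0$) and the pure infiniteness of $\A/\I$: lifting properly infinite projections from $(\A/\I) \otimes \K$ to $\A \otimes \K$ and using $K_0(\A)_+ = K_0(\A)$ to arrange cancellation produces $\I$-projections of arbitrarily large class, forcing $\Sigma(\I)$ unbounded.

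\textbf{Step 2: Fullness of $\mathfrak{e}$.} Since $\A/\I$ is simple, $\tau_{\mathfrak{e}}$ is either zero or injective; it cannot be zero, for then the extension would split and $\I$ would have a complementary nontrivial ideal, again contradicting uniqueness. By simplicity of $\A/\I$, for any nonzero $x \in \A/\I$ the ideal of $\tau_{\mathfrak{e}}(\A/\I)$ generated by $\tau_{\mathfrak{e}}(x)$ is all of $\tau_{\mathfrak{e}}(\A/\I)$, so it suffices to show that $\tau_{\mathfrak{e}}(\A/\I)$ is norm-full in $\corona{\I}$. Having established in Step 1 that $\I$ is a stable AF algebra---in particular, one with the corona factorization property---I would invoke the Kucerovsky--Ng fullness criterion: for such $\I$ and separable simple $\A/\I$, the extension is full if and only if $\A$ has no ideals beyond $0$, $\I$, and $\A$, which is precisely the assumption.

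\textbf{Main obstacle.} The substantive work is in Step 1, specifically the use of $K_0(\A)_+ = K_0(\A)$ to exclude non-unital simple AF algebras with bounded but unattained scale---such pathological algebras do occur in general (e.g., those with dimension group a dense subgroup of $\R$ and irrational scale boundary), so the $K_0$ hypothesis is doing real work here. Fullness in Step 2 then follows as a formal application of the Kucerovsky--Ng criterion once stability is in hand.
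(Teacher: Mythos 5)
There are genuine gaps in both steps, and in both cases the error is in the part you flag as routine. In Step~2, the claimed criterion --- that for stable AF $\I$ and simple $\A/\I$ the extension is full if and only if $\A$ has no ideals beyond $0$, $\I$ and $\A$ --- is false. Uniqueness of the ideal only forces the Busby map to be injective (the extension is essential); fullness requires in addition that the image of $\tau_{\mathfrak e}$ generate all of $\corona{\I}$, and the corona algebra of a stable simple AF algebra other than $\K$ is very far from simple. The paper's own example showing necessity of condition~(2) of Theorem~\ref{thm:extensions} (with $\I={\mathsf M}_{2^\infty}\otimes\K$ and $\A/\I=\mathcal O_2$, from \cite[Example~4.1]{segrer:okfe}) is precisely an essential extension with a unique nontrivial ideal that is \emph{not} full. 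This is why hypothesis~(4) is indispensable for fullness: the actual proof derives fullness of the stabilized extension from $K_0(\A)_+=K_0(\A)$ via real rank zero (Brown--Pedersen, using $\partial_0=0$), stable weak cancellation, and \cite[Corollary~3.22]{segrer:okfe}, which is exactly an equivalence between fullness and positivity of $K_0(\A)$ in this setting. Your proposal instead spends~(4) on stability and leaves fullness to a criterion that does not exist.

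In Step~1 the reduction of stability to ``$\Sigma(\I)$ unbounded'' is also wrong: a simple AF algebra is stable iff $\Sigma(\I)=K_0(\I)_+$, and there exist non-stable, non-unital, simple AF algebras whose scale is unbounded. For instance, on the simple dimension group $\Q^2$ with the strict positive cone, the set $\{(x,y):0<x<1,\,y>0\}\cup\{0\}$ is hereditary, upward directed, generating, has no upper bound, and is a proper subset of the positive cone; it is realized as the scale of a simple AF algebra. So your dichotomy (``the only alternative is a bounded scale without maximum'') fails. Moreover, the proposed mechanism --- lifting properly infinite projections and ``arranging cancellation'' using $K_0(\A)_+=K_0(\A)$ --- is not an argument: $\A$ has a purely infinite simple quotient and does not have cancellation. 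The paper's route is the reverse of yours: it first establishes fullness of the stabilized extension $\mathfrak e^s$ as above, then uses fullness to show that each corner $(e_n\otimes e_{1,1})(\I\otimes\K)(e_n\otimes e_{1,1})$ is isomorphic to $\I\otimes\K$ --- the key point being that a norm-full projection in $\multialg{\I\otimes\K}$ of an AF algebra is Murray--von~Neumann equivalent to the unit --- and deduces stability of $\I$ from Hjelmborg--R{\o}rdam; only then does fullness of $\mathfrak e$ itself follow from \cite[Theorem~3.22]{segrer:okfe}.
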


\begin{proof}
Suppose (1)--(4) hold. Since the embedding from $\A$ to $\A \otimes \K$ induces an order isomorphism from $K_0(\A)$ to $K_0(\A\otimes \K)$, we have $K_0 ( \A \otimes \K )_+ = K_0 ( \A \otimes \K)$.  Note that $\A\otimes \K$ has exactly one proper nontrivial ideal $\I \otimes \K$ (which is an AF algebra) and $\A \otimes \K / ( \I \otimes \K ) \cong (\A/\I) \otimes \K$ is a purely infinite simple $C^*$-algebra.  Also, since $\partial_0$ must vanish, $\A$ has real rank zero by \cite[Theorem~3.19]{lgbgkp:crrz}, and by \cite[Proposition~3.15]{lgbgkp:crrz}, so does $\A \otimes \K$.  Therefore, $\A \otimes \K$ has stable weak cancellation by \cite[Lemma~3.15]{segrer:okfe}, and by \cite[Corollary~3.22]{segrer:okfe}, 
\[
\mathfrak{e}^s: 0 \to \I \otimes \K \to \A \otimes \K \to \A/\I \otimes \K \to 0
\]
is a full extension.  

Let $\{ e_n \}$ be an approximate identity for $\A$ consisting of projections, and let $\{ e_{i,j}\}$ be a system of matrix units for $\K$.  Without loss of generality, we may assume that $e_n$ is not an element of $\I$ for all $n$.  Since 
\[
\I \cong \I \otimes e_{1,1} = \overline{ \bigcup_n ( e_n \otimes e_{1,1} ) (\I \otimes \K) ( e_n \otimes e_{1,1} )}
\]
 with $ ( e_n \otimes e_{1,1} ) (\I \otimes \K) ( e_n \otimes e_{1,1} ) \subseteq  ( e_{n+1} \otimes e_{1,1} ) (\I \otimes \K) ( e_{n+1} \otimes e_{1,1} )$, by \cite[Corollary~4.1]{jhmr:sc}, $\I$ is stable if $( e_n \otimes e_{1,1} ) (\I \otimes \K) ( e_n \otimes e_{1,1} )$ is stable for all $n$.
 
Let $n \in \N$.  Let $\tau_{ \mathfrak{e} }^s \colon \mathfrak A \otimes \K \to \mathcal{Q} ( \I \otimes \K )$ be the Busby map and let $\sigma_{\mathfrak{e}}^s \colon \A \otimes \K \to \mathcal{M} ( \I \otimes \K)$ be the injective $*$-homomorphism such that the diagram
\[
\xymatrix{
0 \ar[r] & \I \otimes \K \ar@{=}[d] \ar[r] & \A \otimes \K \ar[r]^-{\pi} \ar[d]^{\sigma_{\mathfrak{e}}^s} & (\A \otimes \K ) / ( \I \otimes \K ) \ar[r] \ar[d]^{\tau_{\mathfrak{e}}^s} & 0 \\
0 \ar[r] & \I \otimes \K \ar[r] & \mathcal{M} ( \I \otimes \K ) \ar[r] & \mathcal{Q} ( \I \otimes \K ) \ar[r] & 0 
}
\]
commutes.  By the commutativity of the diagram, 
\[
( e_n \otimes e_{1,1} ) (\I \otimes \K) ( e_n \otimes e_{1,1} ) = \sigma_{\mathfrak{e} }^s ( ( e_n \otimes e_{1,1} ) ) (\I \otimes \K) \sigma_{\mathfrak{e} } ( ( e_n \otimes e_{1,1} ) ).
\]
Since $\mathfrak{e}^s$ is a full extension and $e_n \otimes e_{1,1}$ is not an element of $\I \otimes \K$, $\tau_{\mathfrak{e}}^s ( \pi( e_n \otimes e_{1,1} ) )$ is a full projection in $\mathcal{Q} ( \I \otimes \K )$.  By \cite[Lemma~3.3]{segrer:okfe} and the commutativity of the diagram, $\sigma_{\mathfrak{e} }^s ( ( e_n \otimes e_{1,1} ) )$ is a norm-full projection in $\mathcal{M} ( \I \otimes \K )$.  Since $\I \otimes \K$ is an AF algebra, every norm-full projection in $\mathcal{M}( \I \otimes \K)$ is Murray-von Neumann equivalent to $1_{ \mathcal{M}( \I \otimes \K)}$.  Consequently, 
\[
( e_n \otimes e_{1,1} ) (\I \otimes \K) ( e_n \otimes e_{1,1} ) = \sigma_{\mathfrak{e} }^s ( ( e_n \otimes e_{1,1} ) ) (\I \otimes \K) \sigma_{\mathfrak{e} } ( ( e_n \otimes e_{1,1} ) ) \cong \I \otimes \K
\]
which implies that $( e_n \otimes e_{1,1} ) (\I \otimes \K) ( e_n \otimes e_{1,1} )$ is a stable $C^*$-algebra.

By \cite[Theorem~3.22]{segrer:okfe}, $\mathfrak{e}$ is a full extension.
\end{proof}

\begin{defin}
If $\A$ is a $C^*$-algebra, the \emph{scale} of $K_0(\A)$ is the subset
$$ \Sigma \mathfrak{A} = \setof{ x \in K_{0} ( \mathfrak{A} ) }{ \text{$x = [ p ]$ for some projection $p$ in $\mathfrak{A}$}}.$$
If $\A$ and $\B$ are $C^*$-algebras, a homomorphism $\ftn{ \alpha }{ K_{0} ( \mathfrak{A} ) }{ K_{0} ( \mathfrak{B} ) }$ is \emph{contractive} if $\alpha(\Sigma \A)\subseteq \Sigma B$.  We also define an invertible  homomorphism $\ftn{ \alpha }{ K_{0} ( \mathfrak{A} ) }{ K_{0} ( \mathfrak{B} ) }$ to be \emph{scale-preserving} if both $\alpha$ and $\alpha^{-1}$ are contractive, and in the event that $\A$ and $\B$ are both unital we also require $\alpha ([1_\A]) = [1_\B]$.  (If either of $\A$ or $\B$ is nonunital, this last condition imposes no requirement.)

Some comments on this definition are in order:  
When $\A$ is unital and AF, the element $[1_\A]$ is the maximal element of $\Sigma \A$ and in fact, $\Sigma \A = \{ x \in K_0(\A) : 0 \leq x \leq [1_\A] \}$.  Consequently, if $\A$ and $\B$ are both unital AF algebras, an order-preserving bijection $\alpha : K_0(\A) \to K_0(\B)$ is scale-preserving if and only if $\alpha$ and $\alpha^{-1}$ are contractive.  (Thus, the condition $\alpha ([1_\A]) = [1_\B]$ in the definition of scale-preserving is superfluous in the AF case.)  On the other hand, if $A$ is purely infinite and simple, then $\Sigma \mathfrak{A} = K_0(\A)$.  Consequently, if $\A$ and $\B$ are both purely infinite and simple, any homomorphism between their $K_0$-groups is automatically contractive, so that a bijection $\alpha : K_0(\A) \to K_0(\B)$ is scale-preserving if and only if $\alpha ([1_\A]) = [1_\B]$ in the event that $\A$ and $\B$ are unital, and always so in the event at least one of $\A$ and $\B$ is nonunital.  (Thus, the condition that $\alpha$ and $\alpha^{-1}$ be contractive in the definition of scale-preserving is superfluous in the purely infinite case.)
\end{defin}

When $\A$ is AF, the scale $\Sigma \A$ is a subset of $K_0(\A)$ with the following  properties:
\begin{itemize}
\item generating ($\forall x\in K_0(\A)_+,\ \exists y_1, \ldots, y_n  \in \Sigma\A:x=y_1+\cdots+y_n$);
\item hereditary ($\forall x\in K_0(\A)_+, \ \forall y\in \Sigma \A: x\leq y\Longrightarrow x\in\Sigma \A$); and
\item upward directed ($\forall y_1,y_2\in\Sigma\A, \ \exists z\in \Sigma\A:y_1 \leq z \text{ and } y_2 \leq z$).
\end{itemize}

We recall that by the result of Effros, Handelman and Shen (\cite{ehs}), the range of ordered $K_0$-groups associated to (separable) AF algebras are exactly the (countable) \emph{dimension groups}: ordered groups with the Riesz interpolation property that are also unperforated. 
As explained in \cite{ege:dca} it is possible to amend this result by earlier work of Elliott  (\cite{gae:cilssfa})  to see that any  dimension group $(G,G_+)$ with a subset $\Sigma\subseteq G_+$ that is generating, hereditary, and upward directed arises as the scaled $K_0$-group of an AF algebra.

Let $\mathfrak{A}$ be a $C^{*}$-algebra and $\mathfrak{I}$ be an ideal of $\mathfrak{A}$.  Let $\ksix ( \mathfrak{A} , \mathfrak{I} )$ denote the six-term exact sequence \eqref{eq:ksix} in $K$-theory
induced by the extension \eqref{ses}.
$\Ksix ( \mathfrak{A} , \mathfrak{I} )$ will denote the same sequence as in (\ref{eq:ksix}), where the three $K_{0}$-groups are considered as pre-ordered groups.  We let $\Kscale ( \mathfrak{A} , \mathfrak{I} )$ denote the same sequence as in (\ref{eq:ksix}) but $K_{0} ( \mathfrak{A} )$ is considered as a pre-ordered group, and $K_{0} ( \mathfrak{A} / \mathfrak{I} )$ and $K_{0} ( \mathfrak{I} )$ are considered as scaled pre-ordered groups.

For a $C^{*}$-algebra $\mathfrak{B}$ with ideal $\Ia$, a homomorphism $\ftn{ ( \alpha_{\bullet},\beta_\bullet) }{ \ksix ( \mathfrak{A} , \mathfrak{I} ) }{ \ksix ( \mathfrak{B} , \Ia ) }$ consists of six group homomorphisms $\alpha_1,\alpha_2,\alpha_3,\beta_1,\beta_2,\beta_3$, making the following diagram commute:
\begin{eqnarray}\label{isodef}\vcenter{
\xymatrix{ K_0(\I) \ar[rr]^-{\iota_0} \ar[rd]_-{\alpha_1} & & 
K_0(\A) \ar[rr]^-{\pi_0} \ar[d]_-{\alpha_2} & & 
K_0(\A/\I) \ar[ddd]^-{\partial_0} \ar[dl]^-{\alpha_3} \\ 
& {K_0(\Ia)}\ar[r]^-{\iota_0} & {K_0(\B)}\ar[r]^-{\pi_0} 
& {K_0(\B/\Ia)}\ar[d]^-{\partial_0} & \\ 
& {K_0(\B/\Ia)}\ar[u]^-{\partial_1} & {K_1(\B)} \ar[l]_-{\pi_1} 
& {K_1(\Ia)}\ar[l]_-{\iota_1} & \\ 
K_1(\A/\I) \ar[uuu]^-{\partial_1} \ar[ru]^-{\beta_3} & & 
K_1(\A) \ar[ll]_-{\pi_1} \ar[u]^-{\beta_2} & &
K_1(\I) \ar[ll]_-{\iota_1} \ar[ul]_-{\beta_1}}}
\end{eqnarray}
  An isomorphism $\ftni{ ( \alpha_{\bullet},\beta_\bullet ) }{ \ksix ( \mathfrak{A} , \mathfrak{I} ) }{ \ksix ( \mathfrak{B} , \Ia ) }$ is defined in the obvious way.  

Homomorphisms and isomorphisms from $\Ksix ( \mathfrak{A} , \mathfrak{I} )$ to $\Ksix ( \mathfrak{B} , \Ia )$ and from $\Kscale ( \mathfrak{A} , \mathfrak{I} )$ to $\Kscale ( \mathfrak{B} , \Ia )$ are defined in a similar way with the requirement that the additional structure is preserved. We recall that a scale-preserving isomorphism must preserve the classes of the units also in the purely infinite unital case, when this would not follow from the map only be required to send scales to scales.

\section{The outstanding $\upos{\io}{1}$ case}\label{remain} 

In this section we prove the $K$-theoretical existence result necessary to resolve the extension problem in the case where $\I$ is AF, $\A/\I$ is unital and purely infinite, and $\A$ is non-unital. To do so, we carefully describe the invariant used in \cite{jger:} and elaborate on methods from \cite{setkmtjw:rking} to make contact to a result by the third and fifth named authors with Sims (\cite{tkasmk:ragaelua}) which establishes that certain AF algebras are always graph $C^*$-algebras.

Given a short exact sequence of $C^*$-algebras 
\[
\xymatrix{\frak e:&
0 \ar[r] & \I \ar[r]^{\iota} & \A \ar[r]^{\pi} & \A/\I \ar[r] & 0
}
\]
such that the quotient  $\A/\I$ has a unit $1_{\A/\I}$ 
and this unit lifts to a projection in $\A$, 
we define $\tKsix(\A,\I)$ to be 
\begin{equation} \label{Ksix}
\vcenter{\xymatrix{
0 \ar[r] & \wsc{K}_0(\I) \ar[r]^-{\iota_0} \ar[d]^-{\id_0}& 
\wsc{K}_0(\D) \ar[r]^-{\pi_0} \ar[d]^-{\iota_0}
& \wsc{K}_0(\C 1_{\A/\I}) \ar[d]^-{\iota_0}  \ar[r] & 0\\
& K_0(\I)\ar[r]^-{\iota_0} & K_0(\A) \ar[r]^-{\pi_0} & \wsc{K}_0 (\A/\I) \ar[d]^-{\partial_0} & \\
& K_1 (\A/\I) \ar[u]^-{\partial_1} & K_1 (\A)  \ar[l]^-{\pi_1} & K_1(\I) \ar[l]^-{\iota_1} & 
} }
\end{equation}
where $\D := \pi^{-1}(\C 1_{\A/\I}) \subseteq \A$. The top row is exact since $K_1(\C 1_{\A/\I})$ vanishes and 
since $1_{\A/\I}$ lifts to a projection in $\A$. The latter observation further shows that there is a splitting map for the top row. The notation $\wsc{K}_0$ for $K_0$ helps to distinguish the two groups in the top left corner, and to remind us that at the positions of these four groups, we will  impose conditions of contractiveness and scale-preservation. We note:

\begin{lemma}\label{basicD}
If $\frak e$ is a full extension, then so is
\[
\xymatrix{\widetilde{\frak e}:&
0 \ar[r] & \I \ar[r]^{\iota} & \D \ar[r]^{\pi} & \C 1_{\A/\I} \ar[r] & 0.
}
\]
\end{lemma}
\begin{proof}
Follows from the fact that $\tau_{ \mathfrak{e}}(1_{\mathfrak A /\mathfrak I}) = \tau_{\widetilde{\mathfrak{e}}} (1)$.
\end{proof}

 An isomorphism $\ftni{ (\widetilde{\alpha}_{\bullet}, \alpha_{\bullet},\beta_\bullet ) }{ \tKsix ( \mathfrak{A} , \mathfrak{I} ) }{ \tKsix ( \mathfrak{B} , \Ia ) }$ consists of nine group homomorphisms such that  $(\alpha_{\bullet},\beta_\bullet )$ induces an isomorphism from $\Kscale ( \mathfrak{A} , \mathfrak{I} )$ to $\Kscale ( \mathfrak{B} , \Ia )$ as defined in the previous section, and so that the $\widetilde{\alpha}_{\bullet}$ are all scale-preserving. All maps must commute with the morphisms in \eqref{Ksix}.

We will use the following result by the second and fourth named authors:

\begin{theor}[{\cite[Theorem~B]{jger:}}]\label{birthdaypresent}
With notation and assumptions as above, assume that $\A$ is a $C^*$-algebra with exactly one proper nontrivial ideal $\I$ so that the following hold:
\begin{enumerate}[(i)]
\item $\I$ is stable and AF,
\item $\A/\I$ is a unital Kirchberg algebra in the UCT class, and
\item $\mathfrak e$ is a full extension.
\end{enumerate}
Then $\tKsix(\A,\I)$ is a complete invariant for $\A$ 
in the sense that when $\A'$ is another such  $C^*$-algebra with exactly one proper nontrivial ideal $\I'$ satisfying (i)--(iii), then
\[
\tKsix(\A,\I)\cong \tKsix(\A',\I')\Longleftrightarrow \A\cong \A'.
\]
\end{theor}

The class of invariants $\mathcal{\widetilde{E}}$ we consider may be abstractly characterized as consisting of a
cyclic six-term exact sequence 
\begin{equation} \label{six-term}
\vcenter{\xymatrix{
G_1 \ar[r]^-{\epsilon} & G_2 \ar[r]^-{\gamma} & G_3 \ar[d]^-{\delta_0} \\
F_3 \ar[u]^-{\delta_1} & F_2 \ar[l]_-{\gamma'} & F_1 \ar[l]_-{\epsilon'} 
}}
\end{equation}
of abelian groups, and a short exact sequence 
\begin{equation} \label{short-exact}
\vcenter{\xymatrix{
0 \ar[r] & H_1 \ar[r]^-{\widetilde{\epsilon}} & 
H_2 \ar[r]^-{\widetilde{\gamma}} & H_3 \ar[r] & 0
}}
\end{equation}
of scaled ordered groups together with 
three group homomorphisms $\eta_i\colon H_i \to G_i$ 
such that the diagram 
\begin{equation} \label{scale}
\vcenter{\xymatrix{
H_1 \ar[r]^-{\widetilde{\epsilon}} \ar[d]^-{\eta_1}& 
H_2 \ar[r]^-{\widetilde{\gamma}} \ar[d]^-{\eta_2}
& H_3 \ar[d]^-{\eta_3} \\
G_1 \ar[r]^-{\epsilon} & G_2 \ar[r]^-{\gamma} & G_3 
}}
\end{equation}
commutes. 
We summarize the above information 
into the following diagram: 
\begin{equation} \label{invariant}
\vcenter{\xymatrix{
0 \ar[r] & H_1 \ar[r]^-{\widetilde{\epsilon}} \ar[d]^-{\eta_1}& 
H_2 \ar[r]^-{\widetilde{\gamma}} \ar[d]^-{\eta_2}
& H_3 \ar[d]^-{\eta_3}  \ar[r] & 0\\
& G_1 \ar[r]^-{\epsilon} & G_2 \ar[r]^-{\gamma} & G_3 \ar[d]^-{\delta_0} & \\
& F_3 \ar[u]^-{\delta_1} & F_2 \ar[l]_-{\gamma'} & F_1 \ar[l]_-{\epsilon'} & 
}}
\end{equation}
where we require throughout that $\widetilde{\epsilon}$ and $\widetilde{\gamma}$ are positive and contractive homomorphisms, and that $\eta_1,\eta_2,\eta_3,\epsilon,\gamma$ are positive homomorphisms.


\begin{theor}\label{takeshitheor}
Suppose 
 $\mathcal{\widetilde{E}}$ is an invariant so that the following nine conditions hold:
 \begin{enumerate}[(i)]
\item $H_1$ is a simple dimension group with
$\Sigma H_1 = H_1^+$,
\item $
H_2^+ = \widetilde{\epsilon}(H_1^+) \cup \widetilde{\gamma}^{-1}(\{1,2,\ldots\})$ and $ \Sigma H_2 \cap \widetilde{\gamma}^{-1}(\{1\})\not=\emptyset$,
\item  $\Sigma H_2$ is generating, hereditary and upward directed, and does not have a largest element;
\item  $(H_3,(H_3)_+,\Sigma H_3) \cong (\Z,\N\cup\{0\},\{0,1\})$,
\item $\eta_1$ is an isomorphism of ordered groups,
\item $(G_2)_+=G_2$,
\item $G_3$ is finitely generated and $(G_3)_+=G_3$,
\item $F_1=0$, and
\item $F_3$ is a free group with $\operatorname{rank} F_3\leq \operatorname{rank} G_3$.
\end{enumerate}
Then there is a graph $C^*$-algebra $C^*(E)$ with exactly one proper nontrivial ideal $\I$ so that $C^*(E)/\I$ is unital and
$\tKsix(C^*(E),\I)\cong \mathcal{\widetilde{E}}$.
\end{theor}

To prove Theorem \ref{takeshitheor}
we need the following auxiliary results. The first result, 
which is a generalization of \cite[Proposition~4.8]{setkmtjw:rking}, will allow us to glue together two graphs representing $H_1,G_1,F_1$ and $H_3,G_3,F_3$, respectively. 
Note that in \cite[Proposition~4.8]{setkmtjw:rking}, 
it is assumed $n_1 <\infty$, and $x \in \Z^{n_1}$ 
in the proposition below is given by $\mathbf 1$. The version given below also corrects a number of regrettable typos in the statement of \cite[Proposition~4.8]{setkmtjw:rking}.

 We recall the setting from \cite[Proposition~4.3]{setkmtjw:rking}. As in \eqref{six-term} above, we let $\mathcal{E}$  denote an exact sequence of abelian groups
with $F_1$, $F_2$, and $F_3$ free and suppose that there exist column-finite matrices 
$A \in M_{n_1, n_1'} (\Z)$ 
and $B \in M_{n_3, n_3'} (\Z)$
for some $n_1, n_1', n_3, n_3' \in \{0,1,2,\ldots,\infty\}$ 
with isomorphisms 
\begin{align*}
\alpha_1 &\colon \coker A \to G_1, &
\beta_1 &\colon \ker A \to F_1, \\ \nonumber
\alpha_3 &\colon \coker B \to G_3, &
\beta_3 &\colon \ker B \to F_3. 
\end{align*}

We prove in  \cite[Proposition~4.3]{setkmtjw:rking} that there exist a column-finite matrix $Y \in M_{n_1, n_3'} (\Z)$
and isomorphisms 
\begin{align} \label{fourthreea}
\alpha_2 \colon \coker \left(\begin{smallmatrix} A & Y \\ 0 & B \end{smallmatrix} \right) \to G_2, &
& \beta_2 &\colon \ker \left(\begin{smallmatrix} A & Y \\ 0 & B \end{smallmatrix} \right) \to F_2
\end{align}
such that $\alpha_i$ and $\beta_i$ for $i=1,2,3$ 
give an isomorphism (see \eqref{isodef})  from the exact sequence
\begin{equation}\label{fourthreeb}
\vcenter{\xymatrix{ 
\coker A \ar[r]_-{I} 
& \coker {\left( \begin{smallmatrix} A & Y \\ 0 & B \end{smallmatrix} \right)} \ar[r]_-{P} & \coker B\phantom{.} \ar[d]^0 \\
\ker B \ar[u]^{[Y]} &  \ker  {\left( \begin{smallmatrix} A & Y \\ 0 & B \end{smallmatrix} \right)} \ar[l]_-{P'} & \ker A \ar[l]_-{I'}
}}
\end{equation}
to $\mathcal{E}$, where $I,I'$ and $P,P'$ 
are induced by the obvious inclusions or projections.

In the proposition below, the ordering on matrices is given by entrywise ordering, i.e., for matrices $A, B$, we write $A \geq B$ if $A_{i,j} \geq B_{ i,j}$ for all $(i,j)$.

\begin{propo}\label{takeshipropo} 
In the situation described above 
assume that $n_3<\infty$, 
and that $g_2\in G_2$ is given with 
$\alpha_3([\mathbf 1])=\gamma(g_2)$. 
Choose $x \in \Z^{n_1}$ arbitrary 
and define $y \in \Z^{n_1+n_3}$ by 
$y=\left(\begin{smallmatrix}x\\\mathbf 1 \end{smallmatrix}\right)$. 
Suppose that $B$ satisfies the condition that for some $1\leq i,j \leq n_3$ we have
\[
B_{ik}>B_{jk}\qquad 1\leq k \leq n_3'.
\]
Then for a given $Z\in \mathsf{M}_{n_1,n_3'}(\Z)$, 
the matrix $Y\in \mathsf{M}_{n_1,n_3'}(\Z)$ along with $\alpha_2$, $\beta_2$ 
inducing the isomorphism may be chosen 
with the additional properties $Y\geq Z$ and $\alpha_2([y])=g_2$.
\end{propo}

\begin{proof}
Take $Y$, $\alpha_2$, and $\beta_2$ as in \eqref{fourthreea}--\eqref{fourthreeb}.
We are going to define new $Y'$ and $\alpha_2'$ ($\beta_2$ is unchanged) 
which satisfy \eqref{fourthreea}--\eqref{fourthreeb} 
as well as $Y'\geq Z$ and $\alpha'_2([y])=g_2$. 
Set
\[
g_2'=\alpha_2([y])-g_2,
\]
and observe that $\gamma(g_2')=0$ because 
\[
\gamma(\alpha_2([y]))=
\alpha_3(P([y]))=\alpha_3([\mathbf 1])=\gamma(g_2).
\]
Hence there exists $z\in\Z^{n_1}$ such that $\epsilon(\alpha_1([z]))=g_2'$. Choose $Q'\in \mathsf{M}_{n_1,n_3}(\Z)$ such that $z=Q'\mathbf 1$, which is possible because $n_3\geq 1$.
Let $Q''\in \mathsf{M}_{n_1,n_3}(\Z)$ be
\[
(Q'')_{k,\ell}=\begin{cases} 1&\ell=i\\-1&\ell=j\\0&\text{otherwise}
\end{cases}
\]
and note that each row of $Q''B$ is identically
\[
\begin{pmatrix}
B_{i,1}-B_{j,1}&B_{i,2}-B_{j,2}&\cdots& B_{i,n_3'}-B_{j,n_3'}
\end{pmatrix}
\]
which is strictly positive by assumption on $B$. 
Find an integer $c>0$ so that we have
\[
cQ''B\geq Z-Y-Q'B
\]
and set $Q=cQ''+Q'$. 
Since $Q''\mathbf 1 =0$, we have $Q \mathbf 1 =z$.
Set $Y'=Y+QB$. 
Since 
\[
\left( \begin{smallmatrix} I&-Q\\0&I\end{smallmatrix} \right)
 \left( \begin{smallmatrix} A &Y'\\ 0 & B \end{smallmatrix} \right)
=
\left(\begin{smallmatrix}A&Y\\ 0 & B \end{smallmatrix} \right)
\]
we can define a map
\[
\alpha_2':\coker \left(\begin{smallmatrix} A & Y' \\ 0 & B \end{smallmatrix} \right) 
\to G_2
\]
 by
\[
\alpha_2'([y])=\alpha_2\left(\left[\left( \begin{smallmatrix} I & -Q \\ 0 & I \end{smallmatrix} \right)y\right]\right).
\]
Now it is straightforward to see that $Y'$, $\alpha_2'$ and $\beta_2$
satisfy  \eqref{fourthreea}--\eqref{fourthreeb}. 
It is also easy to see $Y'\geq Z$, and 
we have $\alpha_2'([y]) = g_2$ because 
\begin{gather*}
\alpha_2'([y]) = \alpha_2\left(\left[\left( \begin{smallmatrix} I& -Q \\ 0 & I \end{smallmatrix} \right)\left( \begin{smallmatrix} x \\ \mathbf 1  \end{smallmatrix} \right)\right]\right)  
=\alpha_2\left(\left[
\left( \begin{smallmatrix} x \\ \mathbf 1  \end{smallmatrix} \right)\right]\right)
- \alpha_2\left( \left[\left(\begin{smallmatrix}Q\mathbf 1\\ \mathbf 0 \end{smallmatrix} \right)\right]\right)
  = \alpha_2([y])-\alpha_2\left(\left[\left(\begin{smallmatrix}z\\\mathbf 1\end{smallmatrix}\right)\right]\right)\\
= \alpha_2([y])-\epsilon(\alpha_1([z])) = \alpha_2([y])-g_2' = g_2.
\end{gather*}
\end{proof}

The second result will allow us to realize $\Sigma H_2$ by finding an AF graph $C^*$-algebra  with the scale adjusted to our needs.

\begin{lemma}\label{AFpart}
Let $H_i$ satisfy (i)--(iv) of Theorem \ref{takeshitheor} and fix $h_2\in  \Sigma H_2  \cap \widetilde{\gamma}^{-1}(\{1\})$.
Then
\[
\Sigma := \{h \in H_1^+ \mid h_2+\widetilde{\epsilon}(h) \in \Sigma H_2\}
\]
is a scale of $H_1$ that is  generating, hereditary, and upward directed, and does not have a largest element, and consequently there is a graph $E_1$ with 
\[
(K_0(C^*(E_1)),K_0(C^*(E_1))_+,\Sigma C^*(E_1))\cong(H_1,(H_1)_+,\Sigma).
\]
\end{lemma}
\begin{proof}
It is obvious that $\Sigma$ is hereditary. For the remaining claims we note that whenever $h_2+\widetilde{\epsilon}(h)\leq k\in \Sigma$,  $\widetilde{\gamma}(h_2)=1=\widetilde{\gamma}(k)$ and hence $k-h_2=\widetilde\epsilon (h')\geq 0$ for some $h'$. We have
\[
k=h_2+\widetilde\epsilon (h')\geq h_2+\widetilde\epsilon (h),
\]
so we conclude that $\widetilde\epsilon (h'-h)\geq 0$ in $H_2$. By (ii) this implies that $h'\geq h\geq 0$ in $H_1$, and $h'\in \Sigma$.

Since there is no largest element in $\Sigma H_2$, we may arrange that $h<h'\in \Sigma$ as above, and hence there is no largest element in $\Sigma$. In particular we have that $h'>0$, and since $H_1$ is a simple dimension group, any positive element is dominated by $nh'$ for some $n$. Using Riesz decomposition, this shows that any positive element is a finite sum of scale elements, and hence $H_1$ is generated by elements from $\Sigma$.

To see that  $\Sigma$ is upward directed, fix $h,\overline{h}$ in $\Sigma$ and take $k$ so that $h_2+\widetilde{\epsilon}(h),h_2+\widetilde{\epsilon}(\overline{h})\leq k$. As above we may choose  $h'$ with $k=h_2+\widetilde\epsilon (h')$ and conclude that $h,\overline{h}\leq h'$.

We conclude that $(H_1,(H_1)_+,\Sigma)$ is a scaled dimension group with no largest scale element, and thus by  \cite{tkasmk:ragaelua} we may choose $E_1$ as desired.
\end{proof}

\begin{proofof}{Theorem \ref{takeshitheor}}
 We choose $h_2$ and define 
$\Sigma \subseteq H_1$ as  in Lemma \ref{AFpart}
so that there exists a graph $E_1 = (E_1^0, E_1^1, r_{E_1}, s_{E_1})$ 
with $C^*(E_1)$ realizing $({H}_1,(H_1)_+,\Sigma)$ as scaled ordered groups. 

Let $g_2 := \eta_2(h_2) \in G_2$ and $g_3 := \gamma(g_2) \in G_3$. 
By \cite[Proposition~3.8]{setkmtjw:rking}, there is a graph 
$E_3 = (E_3^0, E_3^1, r_{E_3}, s_{E_3})$ with finitely many vertices,
such that 
\begin{enumerate}[(1)]
\item every vertex in $E_3$ is the base point of at least two loops,
\item $E_3$ is transitive (so that, in particular, $C^*(E_3)$ is simple and purely infinite), 
\item $(K_0(C^*(E_3)),[1_{C^*(E_3)}]_0) \cong (G_3,g_3)$ 
and $K_1(C^*(E_3)) \cong F_3$, and
\item there exist two vertices $v, w \in E_3^0$ 
such that $(R_{E_3}-I)(w,v')<(R_{E_3}-I)(v,v')$ 
for all $v' \in (E_3^0)_\textnormal{reg}$.
\end{enumerate}

Arguing as in \cite[Proposition~5.5]{setkmtjw:rking}, but applying Proposition \ref{takeshipropo} in place of \cite[Proposition~4.7 and~4.8]{setkmtjw:rking},
we construct a graph $E_2=(E_2^0, E_2^1, r_{E_2}, s_{E_2})$ 
such that 
$\Ksix(C^*(E_2) ,\I)$ is isomorphic to 
the 6-term part $\mathcal{E}$ of the invariant $\mathcal{\widetilde{E}}$, and  
with the further property that the isomorphism $\alpha_2 \colon K_0(C^*(E_2)) \to G_2$ 
sends $[\ppp]$ to $g_2 \in G_2$, where
\[
\ppp=\sum_{v\in E_3^0} p_v.
\] 
As in \cite{setkmtjw:rking}, $E_2^0=E_1^0\sqcup E_3^0$, and $E_2^1$ contains all edges in $E_1^1\sqcup E_3^1$ as well as a number of additional edges chosen carefully to obtain the relevant $K$-theoretical data.

We note that 
there is a natural surjection $\pi \colon C^*(E_2) \to C^*(E_3)$ 
whose kernel is isomorphic to $\I$. 
Thus we may identify $C^\ast(E_3)$ with the quotient $C^*(E_2)/\I$, having
 $\pi$ as the quotient map. 
We see that the projection $\ppp\in C^*(E_2)$ is 
a lift of $1_{C^*(E_3)}$, and
note that 
$\D := \pi^{-1}(\C 1_{C^*(E_3)}) \subseteq C^*(E_2)$ 
coincides with $\I + \C \ppp$.

We see that $C^*(E_1)$ is naturally a subalgebra of $C^*(E_2)$, 
which is a full and hereditary subalgebra of $\I$, and that
\[
\D\cap\{\ppp\}^\perp=C^*(E_1)
\]
under this identification.
We claim that 
\begin{equation}\label{scalekey}
\Sigma\D=\{ x \in K_0 ( \D )_+ \mid x \leq [ \ppp ] + [q] \text{ for some projection $q \in C^*(E_1)$} \}.
\end{equation}
Since $\D\cap\{\ppp\}^\perp=C^*(E_1)$ and since $\Sigma \D$ is hereditary, 
\[
\{ x \in K_0 ( \D )_+ \mid x \leq [ \ppp ] + [q] \text{ for some projection $q \in C^*(E_1)$} \} \subseteq \Sigma \D.
\]
We now show the other set containment.  Let $p$ be a projection in $\D$.  Since $\Sigma \D$ is upward directed, there exists a projection $r$ in $\D$ such that $[p] , [\ppp] \leq [r]$.  Since the unitization of $\D$, which we denote $\D^\dagger$, has stable rank one, there exists a unitary $u \in \D^\dagger$ such that $\ppp \leq u r u^*$.  Hence, $q = u r u^* - \ppp \in \D\cap\{\ppp\}^\perp=C^*(E_1)$.  Consequently,
\[
[p] \leq [r] =  [ \ppp ] + [ r ] - [ \ppp ] = [ \ppp ] + [ u r u^* ] -  [ \ppp ] = [ \ppp ] + [ q ].
\]
This proves the claim.

Next, we construct group homomorphisms $\widetilde{\alpha}_\bullet$ 
for $\bullet=1,2,3$ so that 
\begin{equation}\label{bigdia}
\vcenter{\xymatrix{
0 \ar[r] & 
\wsc{K}_0(\I) \ar[r]^-{\iota_0} \ar[d]^-{\widetilde{\alpha}_1} & 
\wsc{K}_0(\D) \ar[r]^-{\pi_0} \ar[d]^-{\widetilde{\alpha}_2} &
\wsc{K}_0(\C1_{C^*(E_3)}) \ar[d]^-{\widetilde{\alpha}_3}  \ar[r] & 0\\
0 \ar[r] & H_1 \ar[r]^-{\widetilde{\epsilon}} & 
H_2 \ar[r]^-{\widetilde{\gamma}} 
& H_3  \ar[r] & 0
}}
\end{equation}
commutes. 
We also need to show that the maps $\widetilde{\alpha}_\bullet$ 
intertwine $\eta_\bullet$ and are isomorphisms of scaled ordered groups. 

We set $\widetilde{\alpha}_1=\eta_1^{-1}\circ \alpha_1\circ \operatorname{id}_0$ and note that $\widetilde{\alpha}_1$ is an order isomorphism since it is a composition of three order isomorphisms.  Since $\I$ is stable by \cite{semt:cnga}, the scale of $\wsc{K}_0(\I)$ coincides 
with its positive cone. 
Since the same is assumed for ${H}_1$, 
we conclude that $\widetilde{\alpha}_1$ is an isomorphism 
of scaled ordered groups. 
We also define  $\widetilde{\alpha}_3$ by $\widetilde{\alpha}_3([1_{C^*(E_3)}])=\widetilde{\gamma}(h_2)$, and note  that  
both $\widetilde{\alpha}_1$ and $\widetilde{\alpha}_3$ intertwine the appropriate $\eta_\bullet$ by construction.

Next, we define $\widetilde{\alpha}_2$ as the unique group homomorphism 
satisfying 
$\widetilde{\alpha}_2 \circ \iota_0 
= \widetilde{\epsilon} \circ \widetilde{\alpha}_1$ 
and $\widetilde{\alpha}_2([\ppp])=h_2$. 
The map $\tilde{\alpha}_2$ is automatically an order isomorphism as a consequence of the commutativity in (4.7), the fact that $\tilde{\alpha}_1$ and $\tilde{\alpha}_3$ are order isomorphisms, and the way in which the order structures on $H_2$ and $K^\Sigma_0(\D)$ depend on the order structures at the extremes of the diagram. 
Also, $\widetilde{\alpha}_2$ intertwines $\eta_2$ on the image of $\iota_0$ since
\[
\eta_2\circ \widetilde{\alpha}_2\circ \iota_0 =\eta_2\circ \widetilde{\epsilon} \circ \widetilde{\alpha}_1=\epsilon\circ \eta_1\circ \eta_1^{-1}\circ \alpha_1\circ \operatorname{id}_0
=\alpha_2\circ \iota_0
\]
and on $[\ppp]$ by construction. Thus all that remains is to prove that $\widetilde{\alpha}_2(\Sigma \mathfrak D)=\Sigma H_2$.

When $x=[p]\in \Sigma \mathfrak D$,  $[p] \leq [q]+[\ppp]$ for some projection $q$ in $C^*(E_1)$ by \eqref{scalekey}.  Therefore, 
\[
0 \leq \widetilde{\alpha}_2 ([p]) \leq \widetilde{\epsilon}(\widetilde{\alpha}_1([q])) + h_2 \in \Sigma H_2,
\]
because $\widetilde{\alpha}_1([q])\in \Sigma$ by construction.  Since $\Sigma H_2$ is hereditary, $\widetilde{\alpha}_2 ([p])  \in \Sigma H_2$.

For fixed $h\in \Sigma H_2$, take $k\in \Sigma H_2$ so that $h,h_2\leq k$. As in the proof of Lemma \ref{AFpart}, we have $k=h_2+\widetilde{\epsilon}(h')$ with $h'\in \Sigma$, proving that $k=\widetilde{\alpha}_2(\Sigma \mathfrak D)$. Since $\tilde{\alpha}_2 (\Sigma \D))$ is an order-isomorphic image of a hereditary set, it is also hereditary, and we conclude that $h \in \tilde{\alpha}_2 (\Sigma \D)$.

\end{proofof}

\section{The main result}\label{main}

We are now ready to state and prove the main result of the paper. Recall that the (torsion-free) rank of an abelian group $G$ is the dimension of the $\Q$-vector space $G\otimes \Q$.

\begin{theor}\label{thm:extensions}
Let $\mathfrak{A}$ be a $C^{*}$-algebra with exactly one proper nontrivial ideal $\mathfrak{I}$ so that  $\mathfrak{I}$ and $\mathfrak{A} / \mathfrak{I}$ are  graph $C^{*}$-algebras.  Then $\mathfrak{A}$ is a graph $C^{*}$-algebra if and only if the following three conditions hold:
\begin{itemize}
\item[(1)] The exponential map $\ftn{\partial_{0}}{K_{0} ( \mathfrak{A} / \mathfrak{I} ) }{ K_{1} ( \mathfrak{I} ) }$ is zero.

\item[(2)]  If $K_{0} ( \mathfrak{A} / \mathfrak{I} )_{+} = K_{0} ( \mathfrak{A} / \mathfrak{I} )$, then 
$K_{0} ( \mathfrak{A} )_{+} = K_{0} ( \mathfrak{A} )$.
\item[(3)] If $\mathfrak{A}$ is a unital $C^{*}$-algebra, then 
\begin{itemize}
\item[(a)] $K_{0} ( \mathfrak{I} )$ is finitely generated,

\item[(b)] $\mathrm{rank} ( K_{1} ( \mathfrak{I} ) ) \leq \mathrm{rank} ( K_{0} ( \mathfrak{I} ) )$, and 

\item[(c)] $K_{0}( \mathfrak{I} )_{+} \neq K_{0} ( \mathfrak{I} )$ implies that $ K_{0} ( \mathfrak{I} )\cong \Z$.
\end{itemize}
\end{itemize}  
\end{theor}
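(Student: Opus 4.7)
The plan is to prove the theorem as a biconditional, partitioning everything along the twelve cases of Table~\ref{cases}. Necessity of (1)--(3) should come directly from structural properties of graph $C^*$-algebras; sufficiency should be obtained by the two-step paradigm of producing a graph $E$ whose $C^*$-algebra realizes the given invariant, and then applying an off-the-shelf classification result from Table~\ref{resappl}.

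For necessity, assume $\A\cong C^*(E)$. Since $\A$ has exactly one non-trivial ideal and both subquotients are simple, $E$ automatically satisfies Condition~(K), so $C^*(E)$ has real rank zero; hence $\partial_0=0$ by \cite{lgbgkp:crrz}, which is~(1). For~(2), $K_0(\A/\I)_+=K_0(\A/\I)$ forces $\A/\I$ to be a Kirchberg algebra; given $x\in K_0(\A)$, one lifts a projection representing $\pi_*(x)\in K_0(\A/\I)$ to a properly infinite projection $\tilde q$ in $\A\otimes\K$, writes $x-[\tilde q]=[p]-[p']$ with $p,p'$ projections in $\I\otimes\K$, and absorbs $p'$ into $\tilde q$ using proper infiniteness, yielding $x$ as the class of a projection. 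For~(3), a unital graph $C^*$-algebra arises from a graph with finitely many vertices, so $K_0(\I)$ and $K_1(\I)$ are the cokernel and kernel of an integer matrix acting on a finitely generated free abelian group, giving (a) and (b); and $K_0(\I)_+\ne K_0(\I)$ forces $\I$ to be non-unital AF, essential in the unital $\A$, which by simplicity collapses to $\I$ being Morita equivalent to $\K$, hence $K_0(\I)\cong\Z$.

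For sufficiency, I would proceed case by case through Table~\ref{resappl}. First I arrange $\I$ stable and $\mathfrak{e}$ full: this is automatic outside the $[\oo]$ row by \cite{semt:cnga}, and Lemma~\ref{newlem} supplies it in the $[\oi]$ row using condition~(2). Then the range results in the right column of Table~\ref{resappl} build a graph $E$ with a distinguished ideal realizing $\Kscale(\A,\I)$ as an invariant; conditions (1)--(3) are precisely the constraints needed for this range realization to succeed (with (1) excluding non-trivial connecting maps, (2) ensuring the middle algebra can be realized by a graph with the correct mix of cycles and sinks, and (3) matching the finiteness constraints imposed by having a finite vertex set). Finally, the classification theorem in the left column of Table~\ref{resappl} lifts the invariant isomorphism to a $*$-isomorphism $\A\cong C^*(E)$.

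The principal obstacle is the single case $[\oi]_1$: $\I$ AF, $\A/\I$ a unital Kirchberg algebra, and $\A$ non-unital. Gabe's example \cite{jg:nnae} shows that the naive six-term-plus-scale invariant does not suffice to classify this case, so the earlier approach of \cite{segrer:okfe} breaks down. My plan is to defer this case to Section~\ref{remain}, where I invoke the Gabe--Ruiz classification \cite{jger:} on a refined invariant and construct the matching range realization by a direct graph construction. This is the one case whose completion is original to the present paper, and it is the main technical hurdle the remaining section must overcome.
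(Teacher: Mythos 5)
Your overall architecture (necessity from structural properties; sufficiency by realizing the invariant with a graph and then applying the classification results of Table \ref{resappl}, with $\upos{\oi}{1}$ deferred to the Gabe--Ruiz machinery) coincides with the paper's. However, your necessity argument for condition (2) has a genuine gap. You propose to lift a projection representing $\pi_*(x)$ to a \emph{properly infinite} projection $\tilde q\in\A\otimes\K$ and then absorb classes from $K_0(\I)$ into $[\tilde q]$. The existence of such a lift is exactly what fails in general: in \cite[Example 4.1]{segrer:okfe} --- the very example the paper cites to show that (2) is not automatic --- there is a real rank zero extension of $\mathcal O_2$ by $\mathsf{M}_{2^\infty}\otimes\K$ with $\partial_0=0$ but $K_0(\A)_+\neq K_0(\A)$. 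Any lift $\tilde q$ of a non-zero projection of the quotient is automatically full in $\A\otimes\K$ (since $\I$ is the only non-trivial ideal), so if it could also be chosen properly infinite, your absorption step would yield $K_0(\A)_+\supseteq[\tilde q]+K_0(\A)=K_0(\A)$, a contradiction. Hence the argument must use more of the graph-algebra structure of $\A$ than real rank zero and projection lifting. The paper supplies this by first proving that $\I$ is stable (\cite[Proposition~6.4]{semt:cnga}) and that $\mathfrak e$ is a \emph{full} extension (via \cite[Proposition~3.10]{semt:cnga} and Brown's theorem giving $p\sim 1_{\multialg{\I}}$), and only then invoking \cite[Proposition~4.2]{segrer:okfe}; fullness is the input your sketch omits.

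There is a second, smaller gap in (3)(c): a simple non-unital AF algebra occurring as an essential ideal of a unital $C^*$-algebra is \emph{not} thereby Morita equivalent to $\K$ (again $\mathsf{M}_{2^\infty}\otimes\K$ occurs as such an ideal). The correct route is that unitality of $\A$ forces the graph to have finitely many vertices, so $\I$ is stably isomorphic to the $C^*$-algebra of a finite-vertex subgraph, and a simple AF graph $C^*$-algebra on finitely many vertices is $\mathsf M_k$ or $\K$ by \cite[Proposition~1.18]{ir:ga}, whence $K_0(\I)\cong\Z$. Two minor points on sufficiency: when $\I$ is purely infinite its stability comes from Zhang's dichotomy \cite{sz:ccrrzcma} rather than from \cite{semt:cnga}, since $\A$ is not yet known to be a graph algebra in that direction; and the $\upos{\oo}{*}$ row is not handled by a range-plus-classification step but by the Type I criterion of \cite[Theorem~5.9]{setkermt:iagc}.
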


\begin{proof}\mbox{}\\

\step{1}{Necessity}
Suppose $\mathfrak{A}$ is a graph $C^{*}$-algebra, i.e. $\mathfrak{A} \cong C^{*} ( G )$ for some graph $G$.  Since $\mathfrak{A}$ has finitely many ideals, $G$ satisfies Condition (K) (see the proof of \cite[Lemma~3.1]{semt:cnga}).  Hence, by \cite[Theorem~2.6]{jhhws:pickdg} (also see \cite[Theorem~3.5]{jaj:rrcag}), $C^{*} ( G )$ has real rank zero and hence, $\mathfrak{A}$ has real rank zero. By \cite[Theorem~3.19]{lgbgkp:crrz}, (1) must then hold.

Suppose $K_{0} ( \mathfrak{A} / \mathfrak{I} )_{+} = K_{0} ( \mathfrak{A} / \mathfrak{I} )$, so that we have that $\mathfrak{A} / \mathfrak{I}$ is a purely infinite simple $C^{*}$-algebra by the dichotomy of simple graph $C^*$-algebras. 
By \cite[Proposition~6.4]{semt:cnga}, $\mathfrak{I}$ is stable. We further prove that $\mathfrak e$ is full. Indeed, since $\A/\I$ is simple, it is enough to show that for some $a\in \A/\I$, $\tau_{\mathfrak e}(a)$ is full in $\corona{\I}$. By \cite[Proposition~3.10]{semt:cnga}, there is a projection $p\in \A$ such that $p\I p$ is stable. By \cite[Theorem~4.23]{lgb:smc}, $p\sim 1_{M(\I)}$. Thus, $\tau_{\mathfrak e}(\pi(p))\sim 1_{\corona{\I}}$, and hence $\tau_{\mathfrak e}(\pi(p))$ is full in $\corona{\I}$. By \cite[Proposition~4.2]{segrer:okfe}, $K_{0} ( \mathfrak{A} )_{+ } = K_{0} ( \mathfrak{A} )$.    

Finally, suppose further that $\mathfrak{A}$ is a unital $C^{*}$-algebra.  Then $K_*(\A)$ is the cokernel and kernel, respectively, of a map from $\Z^{m_1+m_2}$ to  $\Z^{n_1+n_2}$ given by a block triangular matrix in which the $m_1\times n_1$-block specifies $K_*(\I)$ and the $m_2\times n_2$-block specifies $K_*(\A/\I)$. We have 
 $m_i\leq n_i<\infty$, so all these groups are finitely generated, and $\operatorname{rank}K_1(\I)\leq \operatorname{rank}K_0(\I)$,  $\operatorname{rank}K_1(\A)\leq \operatorname{rank}K_0(\A)$ and $\operatorname{rank}K_1(\A/\I)\leq \operatorname{rank}K_0(\A/\I)$, establishing  (3)(a) and (3)(b) in particular.
   
For (3)(c),   suppose $K_{0}( \mathfrak{I} )_{+} \neq K_{0} ( \mathfrak{I} )$.  Then $\mathfrak{I}$ is an AF algebra.  Let $H$ be a nontrivial saturated, hereditary subset of $G^{0}$ corresponding to $\mathfrak{I}$.  Since $\mathfrak{A}$ is unital, $G^{0}$ is a finite set, and hence so is  $H$.  Note that $\mathfrak{I} \otimes \K \cong C^{*} ( E_{H} ) \otimes \K$ with $E_{H} = ( H, r_{G}^{-1} ( H ) , r_{G} \vert_{ r_{G}^{-1} ( H ) }, s_{G} \vert_{ r_{G}^{-1} ( H ) } )$.  Since $E_{H}^{0} = H$ is a finite set and $C^{*} ( E_{H} )$ is a simple AF algebra, $C^{*} ( E_{H} ) \otimes \K \cong \K$  by \cite[Proposition~1.18]{ir:ga}. Therefore, $\mathfrak{I} \otimes \K \cong \K$ and hence $K_0(\I)\cong \Z$.\\[1mm]

We now establish sufficiency.  Thus from here onwards, we assume that (1)--(3) hold.

\medskip

\step{2}{The $\upos{\oo}{*}$ case}
To complete the result in the $\upos{\oo}{*}$ cases, we recall that by \cite[Theorem~5.9]{setkermt:iagc}, it suffices to prove  that every unital quotient of $\mathfrak{A}$ is a Type I $C^{*}$-algebra. This is vacuously true in the $\upos{\oo}{0}$ subcase.

In the $\upos{\oo}{1}$ subcase we note that  since $\mathfrak{A} / \mathfrak{I}$ is a unital, simple graph AF
algebra, by \cite[Proposition~1.18]{ir:ga} again, $\mathfrak{A} / \mathfrak{I} \cong \mathsf{M}_{k}$ which is a  Type I $C^{*}$-algebra.

In the $\upos{\oo}{2}$ subcase, we start by noting that by (3)(c), $K_{0} ( \mathfrak{I} ) = \Z$ since $K_{0} ( \mathfrak{I} )_{+} \neq K_{0} ( \mathfrak{I} )$.  Since  (up to isomorphism) the only simple, nonunital AF algebra with $K_{0} ( \mathfrak{I} ) \cong\Z$ is $\K$, we have that $\mathfrak{I} \cong \K$.  Further, as above we have $\mathfrak{A} / \mathfrak{I} \cong \mathsf{M}_{k}$.  Since $\K$ and $\mathsf{M}_{k}$ are Type I $C^{*}$-algebras, by permanence of this class (cf. \cite[6.2.6]{gkp:cag}), $\mathfrak{A}$ is a Type I $C^{*}$-algebra.  \\[1mm]

\step{3}{Stability and fullness}
We first note that in all remaining cases, $\mathfrak{I}$ is stable. This follows from \cite[Theorem~1.2(i)]{sz:ccrrzcma}  whenever $\I$ is purely infinite, so we may assume that $\I$ is stably finite and appeal to Lemma \ref{newlem}.
We further see that in all remaining cases, $\mathfrak e$ is full. This is again easy to see whenever $\I$ is purely infinite since $M(\I)/\I$ is simple by \cite[Theorem~2.3]{hl:}, and follows from Lemma \ref{newlem} in all other cases.\\[1mm]

\step{4}{The $\upos{\ii}{*}$ and $\upos{\io}{*}$ cases}
Consider first the unital cases  $\upos{\ii}{2}$ and $\upos{\io}{2}$. We know that $\A/\I$ is a unital graph $C^*$-algebra, so as in \mystep{1} we conclude that $K_0(\A/\I)$ is finitely generated and that $\mathrm{rank} ( K_{1} ( \A/\mathfrak{I} ) ) \leq \mathrm{rank} ( K_{0} ( \A/\mathfrak{I} ) )$. This shows that  \cite[Theorem~6.4]{setkmtjw:rking} applies, so there exists a graph $E$ with finitely many vertices such that $C^{*} (E)$ has exactly one proper nontrivial ideal $\mathfrak{I}_{1}$ and $(  \alpha_{\bullet},\beta_{\bullet} ) : \ksix ( \mathfrak{A} , \mathfrak{I} ) \overset{\cong}{\longrightarrow} \ksix ( C^{*} ( E ) , \mathfrak{I}_{1} )$ such that all $\alpha_{\bullet}$ are positive isomorphisms and $\alpha_{2}( [ 1_{ \mathfrak{A} } ] ) = [ 1_{ C^{*} (E) }]$.  Appealing to \cite[Corollary~12]{segr:rccconi} (see \cite[Theorem~2.4]{grer:rccconiII}) in the $\upos{\ii}{2}$ case and to  \cite[Corollary~4.16]{segrer:scecc} in the $\upos{\io}{2}$ case, $\mathfrak{A}\cong C^*(E)$.

In the four remaining cases, we first replace, if necessary, the graph presentation of $\mathfrak I$ by  a left adhesive graph (\cite[Lemma~5.4 ($\ell1$)]{setkmtjw:rking}). By \cite[Proposition~5.5]{setkmtjw:rking} we then obtain a graph $E$ such that $C^{*} (E)$ has exactly one proper nontrivial ideal $\mathfrak{I}_{1}$ and such that  $( \alpha_{\bullet},\beta_\bullet ) :\ksix ( \mathfrak{A} , \mathfrak{I} )  \overset{\cong}{\longrightarrow} \ksix ( C^{*} ( E ) , \mathfrak{I}_{1} )$ can be chosen with all $\alpha_\bullet$  positive isomorphisms and $\alpha_{3}$ scale-preserving (in particular, preserving the class of the unit of the quotient in the $\upos{\ii}{1}$ and  $\upos{\io}{1}$ cases). Appealing to an appropriate classification result we get that $\mathfrak{A} \cong C^{*} (E)$: \cite{mr:ceccstesk} for   $\upos{\ii}{0}$,  \cite[Theorem~2.2]{grer:rccconiII} for $\upos{\ii}{1}$, 
\cite[Theorem~2.3]{segrer:cecc} and \cite[Proposition~2]{segrer:cccabfis} for $\upos{\io}{1}$ and   $\upos{\io}{0}$.\\[1mm]

\step{5}{The $\upos{\oi}{0}$ and $\upos{\oi}{2}$ cases}
In the unital case  $\upos{\oi}{2}$, we note as in \mystep{4} that  $K_0(\A/\I)$ is finitely generated and that $\mathrm{rank} ( K_{1} ( \A/\mathfrak{I} ) ) \leq \mathrm{rank} ( K_{0} ( \A/\mathfrak{I} ) )$. By (3)(c), $K_{0} ( \mathfrak{I} ) \cong \Z$.  Since (up to isomorphism)  the only nonunital simple AF
 algebra with $K_{0}$-group isomorphic to $\Z$ is $\K$, we have that $\mathfrak{I} \cong \K$.  By \cite[Theorem~6.4]{setkmtjw:rking}, there exists a graph $E$ with finitely many vertices such that $C^{*} (E)$ has exactly one proper nontrivial ideal $\mathfrak{I}_{1}$ and there exists an isomorphism $( \alpha_{\bullet} ,\beta_\bullet) : \ksix ( \mathfrak{A} , \mathfrak{I} )  \overset{\cong}{\longrightarrow} \ksix ( C^{*} ( E ) , \mathfrak{I}_{1} )$ such that $\alpha_{\bullet}$ are positive isomorphisms and $\alpha_{2} ( [ 1_{ \mathfrak{A} } ] ) = [ 1_{ C^{*} ( E ) } ] $.  Hence, by \cite[Corollary~4.20]{segrer:scecc}, $\mathfrak{A} \cong C^{*} (E)$.

In the $\upos{\oi}{0}$ case we argue as follows.  By replacing, if necessary, the graph presentation of $\mathfrak A/\mathfrak I$ by  a right adhesive graph (\cite[Lemma~5.4 (r1)]{setkmtjw:rking}) we create by \cite[Proposition~5.5]{setkmtjw:rking} a graph $\A$ such that $C^{*} (E)$  has exactly one proper nontrivial ideal $\mathfrak{I}_{1}$ and there exists an isomorphism $( \alpha_\bullet,\beta_{\bullet} ) : \ksix ( \mathfrak{A} , \mathfrak{I} )  \overset{\cong}{\longrightarrow} \ksix ( C^{*} ( E ) , \mathfrak{I}_{1} )$ such that all $\alpha_\bullet$ are positive isomorphisms.   By   \cite[Theorem~2.3]{segrer:cecc} and \cite[Proposition~2]{segrer:cccabfis}, $\mathfrak{A} \cong C^{*} (E)$. \\[1mm]

\step{6}{The $\upos{\io}{1}$ case}
Because of Theorem \ref{birthdaypresent}, we only need to check that   the invariant in \eqref{Ksix} satisfies the conditions in Theorem \ref{takeshitheor}. We see that (vi) holds  by condition (2), and we get (vii)--(ix) because we are in the $\upos{\io}{1}$ case.

Condition (i) follows from the fact that $\I$ is a simple, stable  AF algebra. The first half of (ii) --- that the extension is lexicographic in the sense of Handelman (see \cite{dh:eacdg}) --- follows from Lemma \ref{basicD} and \cite[Corollary~3.22]{segrer:okfe} as a direct consequence of the fullness of $\mathfrak e$,  which we established in Step~3. The second half follows by noting that any lift of  $1_{\A/\I}$ lies in the intersection. Condition (iii) follows because $\mathfrak D$ is a nonunital AF algebra, and (iv) and (v) follow by construction.
\end{proof}

\begin{corol}\label{bentmannplus}
Let $C^*(E_1)$ and $C^*(E_3)$ be unital and simple graph $C^*$-algebras and consider the unital extension
\[
\xymatrix{0\ar[r]&C^*(E_1)\otimes \K\ar[r]&\mathfrak{X}\ar[r]&C^*(E_3)\ar[r]&0.}
\]
The following are equivalent
\begin{enumerate}[(a)]
\item $\mathfrak X$ is a graph $C^*$-algebra;
\item $\mathfrak X$ has real rank zero; 
\item $\partial_0:K_0(C^*(E_3))\to K_1(C^*(E_1))$ vanishes.
 \end{enumerate}
\end{corol}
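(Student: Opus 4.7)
The implications (a) $\Rightarrow$ (b) and (b) $\Rightarrow$ (c) are the easy directions, as already noted in the discussion preceding the conjecture. Specifically, (a) $\Rightarrow$ (b) repeats Step 1 of the proof of Theorem \ref{thm:extensions}: any graph $C^*$-algebra with finitely many ideals has real rank zero via Condition (K) and \cite[Theorem 2.6]{jhhws:pickdg}. The implication (b) $\Rightarrow$ (c) is immediate from \cite[Theorem 3.19]{lgbgkp:crrz}. All the substance lies in (c) $\Rightarrow$ (a), which I plan to derive from Theorem \ref{thm:extensions}.

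A crucial preliminary observation is that, since the extension is unital, the four cases that may arise are all of type $\upos{-}{2}$; in particular, the $\upos{\oi}{1}$ case---the only one not yet settled in the excerpt---never occurs, which is precisely what allows the corollary to be recorded at this point. To apply Theorem \ref{thm:extensions} I will verify conditions (1), (2), (3). Condition (1) is precisely (c). For condition (3), the unital simplicity of $C^*(E_1)$ forces its graph to have finite vertex set, so (3)(a) and (3)(b) follow for $\mathfrak{I}=C^*(E_1)\otimes\K$ from the standard computation of $K$-theory of a graph $C^*$-algebra via the truncated vertex matrix, exactly as in Step 1 of the proof of Theorem \ref{thm:extensions}. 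For (3)(c), if $K_{0}(\mathfrak{I})_{+}\neq K_{0}(\mathfrak{I})$ then the AF/Kirchberg dichotomy forces $C^*(E_1)$ to be AF, and being unital and simple it is isomorphic to $\mathsf{M}_k$ by \cite[Proposition 1.18]{ir:ga}, yielding $K_{0}(\mathfrak{I})\cong\Z$.

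The main obstacle is condition (2), requiring $K_{0}(\mathfrak{X})_{+}=K_{0}(\mathfrak{X})$ whenever $K_{0}(C^*(E_3))_{+}=K_{0}(C^*(E_3))$. If $C^*(E_3)$ is AF then it is a matrix algebra and its positive cone is proper, so the hypothesis of (2) is vacuous. If both $C^*(E_3)$ and $C^*(E_1)$ are Kirchberg, every composition factor of $\mathfrak{X}$ is purely infinite, so $\mathfrak{X}$ is itself purely infinite by \cite[Theorem 4.19]{ekmr:npic} and the desired equality is immediate. The delicate sub-case is $\upos{\oi}{2}$, i.e.\ when $C^*(E_3)$ is Kirchberg while $C^*(E_1)$ is AF, so $\mathfrak{I}\cong\K$; here Lemma \ref{newlem} cannot be invoked directly, since it takes the very conclusion one wants as an input. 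My plan is to establish fullness of $\mathfrak{e}$ by a direct argument: since $\K$ is the unique non-trivial ideal of $\mathfrak{X}$ it is essential, so the Busby map $\tau_{\mathfrak{e}}:C^*(E_3)\to\corona{\K}$ is injective, and simplicity of $C^*(E_3)$ combined with simplicity of the Calkin algebra $\corona{\K}$ forces every non-zero element in the image of $\tau_{\mathfrak{e}}$ to be norm-full. With fullness of $\mathfrak{e}$ in hand, \cite[Proposition 4.2]{segrer:okfe} delivers $K_{0}(\mathfrak{X})_{+}=K_{0}(\mathfrak{X})$, mirroring the necessity portion of the proof of Theorem \ref{thm:extensions}. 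All hypotheses of Theorem \ref{thm:extensions} are then verified, and its conclusion produces the desired graph presentation of $\mathfrak{X}$.
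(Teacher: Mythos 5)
Your proposal is correct and follows essentially the same route as the paper: reduce to verifying the hypotheses of Theorem \ref{thm:extensions}, with (1) and (3) immediate and only (2) requiring a case analysis, and in the key $\upos{\oi}{2}$ sub-case obtain fullness for free from $\I\cong\K$ (injective Busby map into the simple Calkin algebra) before invoking \cite{segrer:okfe}. The only divergence is the $\upos{\ii}{2}$ sub-case, where you use permanence of pure infiniteness \cite[Theorem 4.19]{ekmr:npic} and the standard fact that a unital purely infinite $C^*$-algebra has $K_0=K_0^+$, while the paper reuses the fullness argument of Step 3 via simplicity of $\multialg{\I}/\I$; both are valid.
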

\begin{proof}
As noted in Step~1 of the proof of Theorem \ref{thm:extensions}, we know that $C^*(E_1)$ and $C^*(E_3)$ have real rank zero from the outset. Hence, by \cite[Theorem~3.19]{lgbgkp:crrz}, (b) and (c) are equivalent.  Since $C^*(E_3)$ is simple, the Busby invariant of the above extension is either the zero map or injective.  In the latter case, we have that every nonzero ideal of $\mathfrak X$ has a nontrivial intersection with $C^*(E_1) \otimes \K$.  Thus, as $C^*(E_3)$ and $C^*(E_1)$ are assumed to be simple, either $\mathfrak X$ is isomorphic to $C^*(E_1)\otimes \K\oplus C^*(E_3)$ (the Busby map is the zero map)  or $C^*(E_1)\otimes \K$ is the only proper nontrivial ideal of $\mathfrak X$ (the Busby map is injective).  We conclude that $C^*(E_1)\otimes \K$ is the only proper nontrivial ideal of $\mathfrak X$ since $\mathfrak X$ is unital and $C^*(E_1)\otimes \K\oplus C^*(E_3)$ is nonunital.  Consequently we can apply Theorem \ref{thm:extensions} since $C^*(E_1)\otimes \K$ is itself a graph $C^*$-algebra. When (a) holds, by Theorem~\ref{thm:extensions} we get (c). In the other direction, we need to establish (2) and (3) of Theorem \ref{thm:extensions} separately under the assumption of (c).

Here, (3) follows from the fact that $C^*(E_1)$ is unital and that any unital graph $C^*$-algebra satisfies (a)--(c) of (3) in Theorem~\ref{thm:extensions}, so it remains to establish (2). There is nothing to check in the $\upos{\oo}{2}$ and  $\upos{\io}{2}$ cases.  For the cases, $\upos{\ii}{2}$ and $\upos{\oi}{2}$, by  \cite[Corollary~3.22]{segrer:okfe} it is enough to show that the extension is full.  In both cases, $\corona{ C^*(E_1) \otimes \K}$ is simple since $C^*(E_1) \otimes \K$ is either a purely infinite simple $C^*$-algebra or isomorphic to $\K$.  We conclude that the extension is full in the $\upos{\ii}{2}$ and $\upos{\oi}{2}$ cases since the Busby invariant of the extension is injective. 
\end{proof}

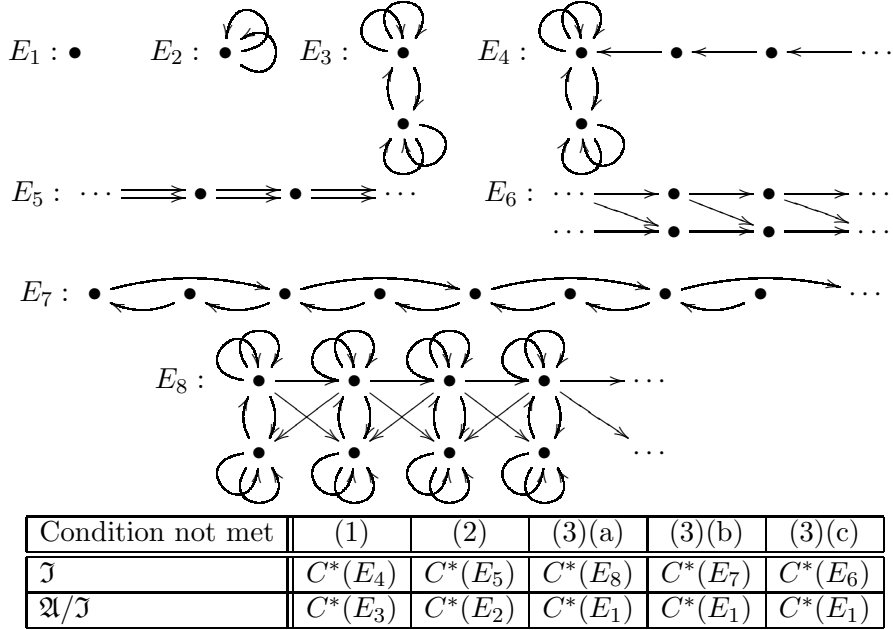
\begin{figure}[h]
\begin{center}
$E_1:$ $\bullet$ \qquad  $E_2:$ $\xymatrix{\bullet\ar@(r,u)[]\ar@(dr,ur)[]}$\qquad
$E_3:\,\,\,\,\,\,$ $\xymatrix@R=0.55cm{\bullet\ar@/^/[d]\ar@(l,u)[]\ar@(ul,ur)[]\\\bullet\ar@/^/[u]\ar@(r,d)[]\ar@(dr,dl)[]}$\qquad
$E_4:\,\,\,\,\,\,$ $\xymatrix@R=0.55cm{\bullet\ar@/^/[d]\ar@(l,u)[]\ar@(ul,ur)[]&\bullet\ar[l]&\bullet\ar[l]&\cdots\ar[l]\\\bullet\ar@/^/[u]\ar@(r,d)[]\ar@(dr,dl)[]}$
\\[5mm]
$E_5:$ $\xymatrix{\cdots\ar@<-0.5mm>[r]\ar@<0.5mm>[r]&\bullet\ar@<-0.5mm>[r]\ar@<0.5mm>[r]&\bullet\ar@<-0.5mm>[r]\ar@<0.5mm>[r]&\cdots}$\qquad
$E_6:$ $\xymatrix@R=0.1cm{\cdots\ar[r]\ar[dr]&\bullet\ar[r]\ar[dr]&\bullet\ar[r]\ar[dr]&\cdots\\
\cdots\ar[r]&\bullet\ar[r]&\bullet\ar[r]&\cdots}$\\[4mm]
$E_7:$ $\xymatrix{\bullet\ar@/^/[rr]&\bullet\ar@/^/[l]&\bullet\ar@/^/[l]\ar@/^/[rr]&\bullet\ar@/^/[l]&\bullet\ar@/^/[l]\ar@/^/[rr]&\bullet\ar@/^/[l]&\bullet\ar@/^/[l]\ar@/^/[rr]&\bullet\ar@/^/[l]&\cdots}$\\[7mm]
$E_8:\quad$ $\xymatrix@R=0.55cm{\bullet\ar@(l,u)\ar@(ul,ur)\ar@/^/[d]\ar[dr]\ar[r]&\bullet\ar[r]\ar@(l,u)\ar@(ul,ur)\ar@/^/[d]\ar[dr]\ar[dl]&\bullet\ar[r]\ar@(l,u)\ar@(ul,ur)\ar@/^/[d]\ar[dr]\ar[dl]&\bullet\ar[r]\ar@(l,u)\ar@(ul,ur)\ar[dl]\ar[dr]\ar@/^/[d]&\cdots\\\bullet\ar@(l,d)\ar@(dl,dr)\ar@/^/[u]&\bullet\ar@(l,d)\ar@(dl,dr)\ar@/^/[u]&\bullet\ar@(l,d)\ar@(dl,dr)\ar@/^/[u]&\bullet\ar@(l,d)\ar@(dl,dr)\ar@/^/[u]&\cdots&}$\\[6mm]
\begin{tabular}{|l||c|c|c|c|c|}\hline
Condition not met&(1)&(2)&(3)(a)&(3)(b)&(3)(c)\\\hline\hline
$\I$&$C^*(E_4)$&$C^*(E_5)$&$C^*(E_8)$&$C^*(E_7)$&$C^*(E_6)$\\\hline
$\A/\I$&$C^*(E_3)$&$C^*(E_2)$&$C^*(E_1)$&$C^*(E_1)$&$C^*(E_1)$\\\hline
\end{tabular}
\end{center}
\caption{Examples showing necessity of the conditions in Theorem \ref{thm:extensions}}\label{graphexx}
\end{figure}

\begin{examp}
The graphs given in Figure \ref{graphexx} can be used to demonstrate necessity of all the individual  conditions in Theorem \ref{thm:extensions} in the sense of producing an extension $\mathfrak e$ satisfying all conditions of Theorem \ref{thm:extensions} but one, and where $\A$ is not a graph $C^*$-algebra. We have that $C^*(E_1)=\mathbb C$, $C^*(E_2)=\mathcal O_2$, $C^*(E_5)={\mathsf M}_{2^\infty}\otimes\K$, that $C^*(E_4)$, $C^*(E_7)$ and $C^*(E_8)$ are the stable UCT Kirchberg algebras with $K$-groups $\Z\oplus\Z$, $0\oplus \Z$ and $\Z^\infty\oplus 0$, respectively, and that $C^*(E_3)$ is the unital UCT Kirchberg algebra with $K$-groups $\Z\oplus\Z$ and vanishing class of the identity.
Finally,  $C^*(E_6)$ is the stable AF algebra with dimension group $\Z+\varphi\Z$ where $\varphi$ is the golden mean.

In all cases but the one with condition (2) this amounts to arranging the $K$-theory in an obvious way clashing with the given condition, finding first an appropriate $K\!K_*$-element and realizing it as explained in \cite{mr:ceccstesk}. The  example to show necessity of (2) is more involved and was explained in \cite[Example~4.1]{segrer:okfe}.
\end{examp}

\begin{remar}
Note that although $C^*(E)\oplus C^*(F)$ is isomorphic to  the graph $C^*$-algebra $C^*(E\sqcup F)$, condition (2) of Theorem \ref{thm:extensions} is not automatically met in this case. It is possible to obtain a result which works for any  situation when $\A$ has an ideal $\I$ so that $\I$ and $\A/\I$ are both simple graph $C^*$-algebras, 
but since this is somewhat convoluted, we will refrain from doing so here.
\end{remar}

\subsection*{Acknowledgments}

 The first named author was supported by the DFF-Research Project 2 `Automorphisms and Invariants of Operator Algebras', no. 7014-00145B, and by the Danish National Research Foundation through the Centre for Symmetry and Deformation (DNRF92). The second named author was funded by the Carlsberg Foundation through an Internationalisation Fellowship. The fourth and fifth named authors were supported by Simons Foundation Collaboration Grants, \#567380 to Ruiz and \#527708 to Tomforde.  We thank the anonymous referee for a careful reading of an earlier version of this paper, which improved the exposition and led to the correction of several potentially confusing mistakes.

\providecommand{\bysame}{\leavevmode\hbox to3em{\hrulefill}\thinspace}
\providecommand{\MR}{\relax\ifhmode\unskip\space\fi MR }
\providecommand{\MRhref}[2]{%
  \href{http://www.ams.org/mathscinet-getitem?mr=#1}{#2}
}
\providecommand{\href}[2]{#2}


\begin{thebibliography}{EKTW16}

\bibitem[Ark13]{sea:dpcke}
S.E. Arklint, \emph{Do phantom {C}untz-{K}rieger algebras exist?}, Operator
  algebra and dynamics, Springer Proc. Math. Stat., vol.~58, Springer,
  Heidelberg, 2013, pp.~31--40. 

\bibitem[Ben19]{rb:ecka}
R.~Bentmann, \emph{Extensions of {C}untz-{K}rieger algebras}, J. Math. Anal. Appl. \textbf{471} (2019), 647--652.

\bibitem[BO08]{npbno:cfa}
N.~P. Brown and N.~Ozawa, \emph{{$C^*$}-algebras and finite-dimensional
  approximations}, Graduate Studies in Mathematics, vol.~88, American
  Mathematical Society, Providence, RI, 2008. 

\bibitem[BP91]{lgbgkp:crrz}
L.G. Brown and G.K. Pedersen, \emph{{$C^*$}-algebras of real rank zero}, J.\
  Funct.\ Anal. \textbf{99} (1991), 131--149.

\bibitem[Bro88]{lgb:smc}
L.G. Brown, \emph{Semicontinuity and multipliers of {$C^*$}-algebras}, Canad.\
  J.\ Math. \textbf{XL} (1988), no.~4, 865--988.

\bibitem[Cun81]{jc:cctmc2}
J.~Cuntz, \emph{A class of {$C^*$}-algebras and topological markov chains {II}:
  Reducible chains and the {E}xt-functor for {$C^*$}-algebras}, Invent.\ Math.
  \textbf{63} (1981), 25--40.

\bibitem[DL94]{mdtal:ecrrzc}
M.~{D\u{a}d\u{a}rlat} and T.A. Loring, \emph{Extensions of certain real rank
  zero ${C}^*$-algebras}, Ann.\ Inst.\ Fourier \textbf{44} (1994), 907--925.

\bibitem[Eff81]{ege:dca}
E.G. Effros, \emph{Dimensions and {$C^*$}-algebras}, CBMS Regional Conf.\ Ser.\
  in Math., no.~46, American Mathematical Society, 1981.

\bibitem[EHS80]{ehs}
E.G. Effros, D.E. Handelman, and C.L. Shen, \emph{Dimension groups and their
  affine representations}, Amer. J. Math. \textbf{102} (1980), no.~2, 385--407.

\bibitem[EK01]{gaedk:avbfat}
G.A. Elliott and D.~Kucerovsky, \emph{An abstract
  {V}oiculescu-{B}rown-{D}ouglas-{F}illmore absorption theorem}, Pacific J.
  Math. \textbf{198} (2001), no.~2, 385--409. 

\bibitem[EKRT14]{setkermt:iagc}
S.~Eilers, T.~Katsura, E.~Ruiz, and M.~Tomforde, \emph{Identifying
  {AF}-algebras that are graph {$C^*$}-algebras}, J. Funct. Anal. \textbf{266}
  (2014), 3968--3996.

\bibitem[EKTW16]{setkmtjw:rking}
S.~Eilers, T.~Katsura, M.~Tomforde, and J.~West, \emph{The ranges of
  {$K$}-theoretical invariants for nonsimple graph algebras}, Trans. Amer.
  Math. Soc. \textbf{368} (2016), 3811--3847.

\bibitem[Ell76]{gae:cilssfa}
G.A. Elliott, \emph{On the classification of inductive limits of sequences of
  semisimple finite-dimensional algebras}, J. Algebra \textbf{38} (1976),
  no.~1, 29--44.

\bibitem[ER06]{segr:rccconi}
S.~Eilers and G.~Restorff, \emph{On {R\o rdam's} classification of certain
  {$C^*$}-algebras with one nontrivial ideal}, Operator algebras: The Abel
  symposium 2004, Abel Symposia, no.~1, Springer-Verlag, 2006, pp.~87--96.

\bibitem[ERR]{segrer:scecc}
S.~Eilers, G.~Restorff, and E.~Ruiz, \emph{Strong classification of extensions
  of classifiable {$C^*$}-algebras}, preprint, arXiv:1301.7695.

\bibitem[ERR09]{segrer:cecc}
\bysame, \emph{Classification of extensions of classifiable {$C^*$}-algebras},
  Adv. Math. \textbf{222} (2009), 2153--2172.

\bibitem[ERR14]{segrer:okfe}
\bysame, \emph{The ordered {$K$}-theory of a full extension}, Canad. J. Math
  \textbf{66} (2014), 596--624.

\bibitem[ERR16]{segrer:cccabfis}
\bysame, \emph{Corrigendum to ``{C}lassifying {$C^*$}-algebras with both finite
  and infinite subquotients" [{J. Funct. Anal.} 265 (2013) 449-468]},
  J. Funct. Anal. \textbf{270} (2016), 854--859.

\bibitem[ET10]{semt:cnga}
S.~Eilers and M.~Tomforde, \emph{On the classification of nonsimple graph
  algebras}, Math. Ann. \textbf{346} (2010), 393--418.

\bibitem[FLR00]{flr:graph} N.J.~Fowler, M.~Laca, and I.~Raeburn, \emph{The {$C^*$}-algebras of infinite graphs}, Proc. Amer. Math. Soc. \textbf{128} (2000), no. 8, 2319--2327. 

\bibitem[Gab16]{jg:nnae}
J.~Gabe, \emph{A note on nonunital absorbing extensions}, Pacific J. Math.
  \textbf{284} (2016), no.~2, 383--393. 

\bibitem[GR18]{jger:}
J.~Gabe and E.~Ruiz, \emph{The unital Ext-groups and classification of $C^*$-algebras}, to appear in  Glasgow Math. J.,  \texttt{doi:10.1017/S0017089519000053}.

\bibitem[Han82]{dh:eacdg}
D.~Handelman, \emph{Extensions for {AF} {$C^{\ast}$}\ algebras and dimension
  groups}, Trans. Amer. Math. Soc. \textbf{271} (1982), no.~2, 537--573.

\bibitem[HR98]{jhmr:sc}
J.~Hjelmborg and M.~R\o{}rdam, \emph{On stability of {$C^*$}-algebras}, J.
  Funct. Anal. \textbf{155} (1998), 153--170.

\bibitem[HS03]{jhhws:pickdg}
J.H. Hong and W.~Szyma{\'n}ski, \emph{Purely infinite {C}untz-{K}rieger
  algebras of directed graphs}, Bull. London Math. Soc. \textbf{35} (2003),
  no.~5, 689--696. 

\bibitem[Jeo04]{jaj:rrcag}
J.A Jeong, \emph{Real rank of {$C^*$}-algebras associated with graphs}, J.
  Aust. Math. Soc. \textbf{77} (2004), no.~1, 141--147. 

\bibitem[Kir00]{ek:nkmkna}
E.~Kirchberg, \emph{Das nicht-kommutative Michael-Auswahlprinzip und die Klassifikation nicht-einfacher Algebren}. 
$C\sp *$-algebras (M\"unster, 1999), pp. 92-141. Springer, Berlin 2000.


\bibitem[KR00]{ekmr:npic}
E.~Kirchberg and M.~R{\o}rdam, \emph{Non-simple purely infinite
  ${C}^*$-algebras}, Amer. J. Math. \textbf{122} (2000), 637--666.

\bibitem[KST09]{tkasmk:ragaelua}
T.~Katsura, A.~Sims, and M.~Tomforde, \emph{Realization of {AF}-algebras as
  graph algebras, {E}xel-{L}aca algebras, and ultragraph algebras}, J. Funct.
  Anal. \textbf{257} (2009), no.~5, 1589--1620. 

\bibitem[Lin89]{hl:}
H.~Lin, \emph{The simplicity of the quotient algebra $M(A)/A$
 of a simple $C^*$-algebra}, Math. Scand. \textbf{65} (1989), 119-128. 

\bibitem[LR95]{hlmr:eilca}
H.~Lin and M.~R\o{}rdam, \emph{Extensions of inductive limits of circle
  algebras}, J.\ London Math.\ Soc. \textbf{51} (1995), no.~2, 603--613.

\bibitem[Ped79]{gkp:cag}
G.K. Pedersen, \emph{{$C^*$}-algebras and their automorphism groups}, Academic
  Press, London, 1979.

\bibitem[Rae05]{ir:ga}
I.~Raeburn, \emph{Graph algebras}, CBMS Regional Conference Series in
  Mathematics, vol. 103, Published for the Conference Board of the Mathematical
  Sciences, Washington, DC, 2005. 

\bibitem[Res06]{gr:cckasi}
G.~Restorff, \emph{Classification of {Cuntz-Krieger} algebras up to stable
  isomorphism}, J. Reine Angew. Math. \textbf{598} (2006), 185--210.

\bibitem[R{\o}r97]{mr:ceccstesk}
M.~R{\o}rdam, \emph{Classification of extensions of certain ${C}^*$-algebras by
  their six term exact sequences in ${K}$-theory}, Math. Ann. \textbf{308}
  (1997), no.~1, 93--117.

\bibitem[RR07]{grer:rccconiII}
G.~Restorff and E.~Ruiz, \emph{On {R\o rdam's} classification of certain
  {$C^*$}-algebras with one nontrivial ideal {II}}, {Math. Scand.} \textbf{101}
  (2007), 280--292.

\bibitem[Spi88]{jss:eceia}
J.S. Spielberg, \emph{Embedding {$C^*$}-algebra extensions into {$AF$}
  algebras}, J. Funct. Anal. \textbf{81} (1988), no.~2, 325--344. 

\bibitem[Zha92]{sz:ccrrzcma}
S.~Zhang, \emph{Certain {$C^\ast$}-algebras with real rank zero and their
  corona and multiplier algebras. {I}}, Pacific J. Math. \textbf{155} (1992),
  no.~1, 169--197. 

\end{thebibliography}
\end{document}